\newtheorem{definition}{Definition}[section]
\newtheorem{lemma}[definition]{Lemma}
\newtheorem{theorem}[definition]{Theorem}
\newtheorem{remark}[definition]{Remark}
\newtheorem{corollary}[definition]{Corollary}
\newtheorem*{maintheorem}{Main Theorem}
\def\NN{\mathbb{N}}
\def\BB{\mathbb{B}}
\newcommand{\True}{0}
\newcommand{\False}{1}
\newcommand{\PAomega}{{\rm PA}^\omega}
\newcommand{\ACA}{{\sf ACA_0}}
\newcommand{\CA}{{\sf AC}^0}
\newcommand{\pCA}{\Pi^0_1\mbox{-}{\sf AC}}
\newcommand{\WKL}{{\sf WKL}}
\newcommand{\RAM}[2]{{\sf RT}^{#1}_{#2}}
\newcommand{\RAMPAIR}[1]{\RAM{2}{2}({#1})}
\newcommand{\IPP}{{\sf IPHP}}
\newcommand{\BR}{{\sf BR}}
\newcommand{\systemT}{{\rm T}}
\newcommand{\ER}[1]{{\sf E/R}({#1})}
\newcommand{\INF}{{\sf Depth}}
\newcommand{\cZero}{{\bf 0}}
\newcommand{\emptyseq}{\langle \, \rangle}
\newcommand{\eqleft}[1]{\begin{itemize} \item[] $#1$ \end{itemize}}
\newcommand{\pair}[1]{\langle #1 \rangle}
\newcommand{\initSeg}[2]{[#1](#2)}
\newcommand{\ext}[1]{\widehat{#1}}
\newcommand{\game}{\mathcal{G}}
\newcommand{\selEmb}[1]{\overline{#1}}
\newcommand{\nd}[3]{|{#1}|^{#2}_{#3}}
\newcommand{\lth}[2]{{#1}^{\BB^{{#2}}}}
\newcommand{\mt}{T'}
\newcommand{\sbool}{\alpha}
\newcommand{\swit}{\beta}
\newcommand{\is}[1]{[{#1}]}
\newcommand{\mona}{\textup{M}1}
\newcommand{\monb}{\textup{M}2}
\newcommand{\EPS}[3]{{\sf EPS}_{#1}^{#2}({#3})}
\newcommand{\EPSs}{{\sf EPS}}
\begin{document}

%%%%%%%%%%%%%%%%%%%%%%%%%%%%%%%%%%%%%%%%%%%%%%%%
%%%%%%%%%%%%%%%%%%%%%%%%%%%%%%%%%%%%%%%%%%%%%%%%
\title{A Constructive Interpretation of Ramsey's Theorem via the Product of Selection Functions}
%%%%%%%%%%%%%%%%%%%%%%%%%%%%%%%%%%%%%%%%%%%%%%%%
%%%%%%%%%%%%%%%%%%%%%%%%%%%%%%%%%%%%%%%%%%%%%%%%

\author{Paulo Oliva and Thomas Powell \\ \\ \emph{School of Electronic Engineering and Computer Science,} \\ \emph{Queen Mary University of London}}

\date{}

\maketitle

\begin{abstract}We use G\"{o}del's dialectica interpretation to produce a computational version of the well known proof of Ramsey's theorem by Erd\H{o}s and Rado. Our proof makes use of the product of selection functions, which forms an intuitive alternative to Spector's bar recursion when interpreting proofs in analysis. This case study is another instance of the application of proof theoretic techniques in mathematics.\end{abstract}

\thispagestyle{first}

%%%%%%%%%%%%%%%%%%%%%%%%%%%%%%%%%%%%%%%%%%%%%%%%
%%%%%%%%%%%%%%%%%%%%%%%%%%%%%%%%%%%%%%%%%%%%%%%%
\section{Introduction}
%%%%%%%%%%%%%%%%%%%%%%%%%%%%%%%%%%%%%%%%%%%%%%%%
%%%%%%%%%%%%%%%%%%%%%%%%%%%%%%%%%%%%%%%%%%%%%%%%
\label{sec-intro}

In a fundamental paper of the 1950s \cite{Kreisel(51),Kreisel(52)}, Kreisel first suggested utilising proof interpretations to systematically `unwind' non-constructive proofs and discover their constructive content. Kreisel's pioneering work forms the foundation of modern applied proof theory (in the sense of \cite{Kohlenbach(2008)}) which has seen variants of G\"{o}del's functional interpretation used to produce improved results in areas such as numerical analysis and ergodic theory through the extraction of computational content from classical proofs. 

This \textit{proof mining} program, as it is known today, has generally focused on developing general metatheorems that guarantee the extractability of effective uniform bounds from proofs of theorems of a specific logical form - usually relatively simple $\Pi_2$ theorems for which direct computational data can be found. In other words, on the whole proof interpretations have been applied to extract purely quantitative information from a fairly restricted class of theorems. However, the last decade has seen proof interpretations employed much more widely, with an increasing emphasis on understanding the qualitative aspects of interpreted proofs.

There are two main reasons for this. The first is a greater appreciation of the mathematical significance of proof interpretations. It was recently observed (e.g. \cite{Kohlenbach(2008)}) that the monotone variant of G\"{o}del's dialectica interpretation is closely related to the so-called `correspondence principle' between finite and infinite dynamical systems as discussed by T. Tao in \cite[Ch. 1.3]{Tao(2008A)}. This observation lies behind current applications of the dialectica interpretation in ergodic theory (see e.g. Avigad \cite{Avigad(2009)}), which in particular explore the dialectica interpretation of Cauchy convergence, known to mathematicians as \textit{metastability}.

The second reason is an improved understanding of the \textit{semantics} of proof interpretations. Formal translations on proofs are highly syntactic and in particular the functional interpretation of proofs that make use of full arithmetic comprehension traditionally involves Spector's abstruse bar recursion schema. Consequently, realisers for interpreted proofs are often stated as almost unreadable higher type terms. This issue is addressed in recent work by the authors and M. Escard{\'o} \cite{EO(2010A),EO(2011A)}, who show that the product of selection functions provides us with an intuitive alternative to bar recursion that can be understood in terms of the computation of optimal strategies in a certain class of sequential games. This makes it easier to appreciate the operational behaviour of realisers of interpreted theorems in analysis. 

Therefore the authors believe that it is both practical and meaningful to apply proof interpretations to classical proofs with the object of producing a \textit{mathematical} proof of a new, finitary theorem, as opposed to just extracting a new piece of quantitative information. In this article we apply G\"{o}del's dialectica interpretation to Erd\H{o}s and Rado's proof of Ramsey's theorem for pairs, similarly to what has been done in \cite{Kreuzer(2009),Kreuzer(2009A)}. Our main aim here, however, is to produce an intuitive combinatorial proof of the finitary form of the Ramsey's theorem given in Section \ref{sec-intro-ram}. For that purpose, we endeavour to strip our proof of heavy logical syntax in order to understand it in mathematical terms. In a broader sense our aim is to portray the dialectica interpretation as an intelligent translation on proofs as opposed to just a syntactic translation on logical formulas.

The paper is organised as follows. We begin by formulating Ramsey's theorem and its proof in the language of formal arithmetic. We then briefly discuss the main building block of our extracted proof, the product of selection functions, and in Section \ref{sec-extract} we prove our finitary version of the theorem. Finally, we discuss a game theoretic reading of our proof.
% Finally, we relate our finitised Ramsey theorem to the correspondence principle and the finitisations of Tao. 

%%%%%%%%%%%%%%%%%%%%%%%%%%%%%%%%%%%%%%%%%%%%%%%%
\subsection{Preliminaries}
%%%%%%%%%%%%%%%%%%%%%%%%%%%%%%%%%%%%%%%%%%%%%%%%

In this article we assume that the reader is familiar with G\"{o}del's \textit{dialectica interpretation} of classical proofs (cf. \cite{Avigad(98),Kohlenbach(2008)} and the original paper \cite{Goedel(58)}), by which we implicitly mean G\"{o}del's dialectica interpretation combined as usual with the negative translation\footnote{As in \cite{Kohlenbach(2008)} we adopt Kuroda's variant of the negative translation.}. We do not assume familiarity with the authors' recent work on the product of selection functions - although the reader is encouraged to consult \cite{EO(2011A)} for a more detailed treatment of the results mentioned in Section \ref{sec-selection}.

%\footnote{Something about extensionality}

The theory $\PAomega$ is Peano arithmetic in all finite types, and $\systemT$ is G\"{o}del's quantifier-free theory of higher-type primitive recursive functionals (see \cite{Avigad(98)} for full definitions). We make informal use of types like the Booleans $\BB = \{0,1\}$ and finite sequence types $X^\ast$. \\[2mm]
{\bf Notation}. We use the following abbreviations:
\begin{description}
\item $s\ast t$ is the concatenation of sequences $s$ and $t$.
\item $\ext{s} \equiv s * \cZero^{X^\NN}$ a canonical infinite extension of a finite sequence $s$.
\item $\initSeg{\alpha^{X^\NN}}{n}\equiv\pair{\alpha 0,\ldots,\alpha (n-1)}$ is the initial segment of $\alpha$ of length $n$.
%\item[]$\subs{\alpha}{\alpha i}{x}$ is the sequence $\alpha$ but with the entry $\alpha i$ replaced by $x$.
\end{description}

Finally, we make use of the following key logical principles. $\Pi_1^0$ \emph{countable choice} is given by the schema 
\begin{equation*}\Pi_1^0\mbox{-}\CA \ \colon \ \forall n\exists x^X\forall y^Y A_n(x,y)\to\exists\alpha^{\NN\to X}\forall n,y A_n(\alpha n,y),\end{equation*}
where the $A_n$ are quantifier-free, \emph{weak K\"{o}nig's lemma} is the statement that any infinite decidable binary tree $T$ has an infinite branch: 
\begin{equation*}\WKL \ \colon \ \forall n\exists s^{\BB^\ast}(|s|  \wedge T(s))\to\exists\alpha^{\NN\to \BB}\forall n T(\initSeg{\alpha}{n}), \end{equation*}
and the \emph{infinite pigeonhole principle} states that for any $n$-colouring $c$ of the natural numbers, at least one colour $x$ is used infinitely often:
\begin{equation*} 
\IPP \ \colon \ \forall c^{\NN\to [n]}\exists x,p^{\NN\to\NN}\forall k (pk \geq k \wedge c(pk)=x).
\end{equation*}
Note that of these only $\IPP$ is provable in $\PAomega$

%%%%%%%%%%%%%%%%%%%%%%%%%%%%%%%%%%%%%%%%%%%%%%%%
\subsection{Ramsey's theorem for pairs}
%%%%%%%%%%%%%%%%%%%%%%%%%%%%%%%%%%%%%%%%%%%%%%%%
\label{sec-intro-ram}

In this article we only consider Ramsey's theorem for pairs and $2$-colourings on the basis that our results can be extended to the more general theorem, although in the course of our program extraction we hint at how key steps can be generalised for the $n$-colour case. 

Let $[\NN]^2$ denote the set of subsets of $\NN$ of size two, and suppose we are given a colouring $c \colon [\NN]^2 \to \BB$ of $[\NN]^2$ with two colours. Ramsey's theorem says that for any such colouring there exists an infinite pairwise monochromatic subset of $\NN$ i.e. an infinite set $S \subseteq \NN$ such that all elements of $[S]^2$ have colour $x$ for some $x\in\BB$. Formally, we write Ramsey's theorem as
\begin{equation*}\RAMPAIR{c} \ \colon \ \exists x^\BB \exists F^{\NN\to\NN} \forall k (F k \geq k \wedge \forall i, j \leq k (F i < F j \to c(\{F i, F j\}) = x)). \end{equation*}
Here the infinite monochromatic set is encoded by the function $F$ and is given by $S_F=\{Fk \ \colon \ k\in\NN\}$. Our main result is a constructive proof of the dialetica interpretation of $\RAMPAIR{c}$:
\begin{maintheorem}Suppose the colouring $c$ is fixed. For any functional $\eta\colon \BB\times \NN^\NN\to\NN$ there exists a colour $x \colon \BB$ and a function $F\colon \NN\to\NN$ satisfying
\begin{equation} \label{finramsey}
\forall k \leq \eta_x F (F k \geq k \wedge \forall i, j \leq k (F i < F j \to c(\{F i, F j\}) = x)).
\end{equation}
\end{maintheorem}
As with all $\Sigma_2$ theorems, the functional interpretation of Ramsey's theorem coincides with Kreisel's \textit{no-counterexample} interpretation (n.c.i.). The intuition is that the counterexample functions $\eta_0$ and $\eta_1$ attempt to show that for any $F$ the set $S_F$ cannot be pairwise monochromatic, and we are challenged to effectively refute any such counterexample functions. Alternatively, we can view $\eta$ as a function that specifies in advance how we want to ``use" Ramsey's theorem in a specific computation: while in general there is no effective way of realising Ramsey's theorem, given $\eta$ we can produce an \textit{approximation} to a monochromatic set that is sufficient for the computation we have in mind.

%%%%%%%%%%%%%%%%%%%%%%%%%%%%%%%%%%%%%%%%%%%%%%%%
\subsection{Comparison to existing work}
%%%%%%%%%%%%%%%%%%%%%%%%%%%%%%%%%%%%%%%%%%%%%%%%
\label{subsec-related}

Ramsey's theorem has been extensively studied in logic, so it is important to outline how our work contrasts to related papers on the constructive content of the theorem. 

In \cite{Bellin(90)} Bellin uses proof theoretic techniques to produce a proof of a finitary version of Ramseys' theorem similar to (\ref{finramsey}). However, his proof differs from ours in two important respects. Firstly, his is based on Ramsey's original proof as opposed to the one analysed here by Erd\H{o}s and Rado, and secondly he uses cut-elimination and Herbrand's theorem as opposed to the dialectica interpretation.  

A formalisation of Erd\H{o}s and Rado's proof was recently given by Kreuzer and Kohlenbach in \cite{Kreuzer(2009)}, and a bar-recursive realizer for its functional interpretation was stated in \cite{Kreuzer(2009A)}. The main achievement of these works is to calibrate the proof theoretic strength of $\RAMPAIR{c}$ and establish its contribution to the complexity of extracted programs in certain cases, whereas our goal is to produce an intuitive constructive version of the Erd\H{o}s-Rado proof using the product of selection functions that can be understood in mathematical terms. 

We note that while our formalisation of the Erd\H{o}s-Rado proof is influenced by theirs in that we also encode the Erd\H{o}s-Rado min-monochromatic tree as as a binary $\Sigma^0_1$ tree, our treatment differs substantially from \cite{Kreuzer(2009),Kreuzer(2009A)}. In particular we encode min-monochromatic branches using a different $\Sigma^0_1$ tree, and in our program extraction we use new interpretations of $\WKL$ and $\pCA$ using the product of selection functions, as opposed to the standard bar-recursive interpretations of Howard and Spector used in \cite{Kreuzer(2009A)}.

Veldman and Bezem \cite{Veldman(1993)} discovered an interesting constructive variant of Ramsey's theorem. That formulation and proof have been simplified by Coquand in \cite{Coquand(1994B)} (see also \cite{Coquand(1994)}). Coquand's proof makes use a recursion on well-founded trees similar to Spector's bar recursion. The main difference being that in our algorithm the well-founded tree is not given explicitly as part of the problem, as it is in Coquand's formulation of Ramsey's theorem.

To summarise, then, in comparison to existing work our analysis of Ramsey's theorem combines the following key benefits:
\begin{enumerate}

\item Our \textbf{constructive interpretation} of the theorem is based on G\"{o}del's dialectica interpretation. The advantage of this is that our theorem is more `computational' than e.g. that of Veldman and Bezem \cite{Veldman(1993)} in that we explicitly prove the existence of arbitrarily large approximations to a monochromatic set. Moreover, as indicated previously, our finitary Ramsey's theorem can be related to the finitisations of infinitary theorems in the sense of Tao \cite{Tao(2008A)}. 

\item Our \textbf{constructive proof} of Ramsey's theorem is based on the product of selection functions as opposed to Spector's bar recursion and can be given a clear game theoretic interpretation (Section \ref{subsec-gameint}).

\end{enumerate}

%%%%%%%%%%%%%%%%%%%%%%%%%%%%%%%%%%%%%%%%%%%%%%%%
%%%%%%%%%%%%%%%%%%%%%%%%%%%%%%%%%%%%%%%%%%%%%%%%
\section{A Formal Proof of Ramsey's Theorem}
%%%%%%%%%%%%%%%%%%%%%%%%%%%%%%%%%%%%%%%%%%%%%%%%
%%%%%%%%%%%%%%%%%%%%%%%%%%%%%%%%%%%%%%%%%%%%%%%%
\label{sec-formal}

\noindent\textbf{Notation.} For simplicity we encode a colouring $c\colon [\NN]^2\to\BB$ as a map $c\colon \NN^2\to\BB$ with the property that $c(i,j)=c(j,i)$ for all $i,j$. \\

We now present a formal proof of Ramsey's theorem based on that of Erd\H{o}s and Rado (\cite{Erdos(84)}, Section 10.2). In doing so we show that $\RAMPAIR{c}$ can be formalised in $\PAomega+\WKL+\pCA$, and therefore its functional interpretation can theoretically be witnessed using Spector's bar recursion \cite{Kohlenbach(2008),Spector(62)}, or alternatively (as we demonstrate in Section \ref{sec-selection}) the product of selection functions. Of course, actually constructing this witness is non-trivial -- the soundness theorem for the dialectica interpretation gives a syntactic algorithm which would be impractical to carry out by hand. Therefore, we make use of the soundness theorem as a very rough guide on how to proceed but use shortcuts whenever possible. 

The main idea behind the classical proof is, given a colouring $c$, to organise the natural numbers into a tree (described as an ordering $\prec$ on $\NN$) whose branches are \textit{min-monochromatic}, in the sense that $c(i,j)=c(i,k)$ for $i\prec j\prec k$, where $i \prec j$ says that node $i$ precedes $j$ in the tree. This is the so-called Erd\H{o}s-Rado (E/R) tree. By K\"{o}nig's lemma the E/R tree has an infinite min-monochromatic branch $a\colon \NN^\NN$, so by the infinite pigeonhole principle applied to the colouring $c^a(i)=c(a(i),a(i+1))$ there exists an infinite subset of the branch that is pairwise monochromatic.

Our formal proof proceeds, in a similar fashion to \cite{Kreuzer(2009)}, as follows. We encode branches of the E/R tree by an infinite $\Sigma^0_1$ binary tree $T$ (Definition \ref{bin-ertree}). We then reduce $T$ to an infinite \emph{decidable} binary tree using $\pCA$ (Lemma \ref{skolem-lemma}), which by $\WKL$ has an infinite branch. We then show that an infinite branch of $T$ does indeed encode an infinite branch of the E/R tree (Lemma \ref{build-a}). Hence, we are finally able to complete the proof using $\IPP$ (Theorem \ref{thm-ramsey}). Because we are only considering here the case of two colours, we do not need the full $\IPP$ but only a very simple instance of it (case $n = 2$). Nevertheless, we discuss the whole construction in terms of the full $\IPP$ so that a generalisation to the case of finitely many colours is more straightforward. We sketch our formal proof in Figure \ref{fig-prooftree}. Here $\ER{c}$ abbreviates the statement that the E/R tree defined by $c$ has an infinite branch.  

\begin{figure}[h]
\begin{center}
\begin{prooftree}
\AxiomC{$\IPP$}
\AxiomC{$\WKL$}
\AxiomC{$\pCA$}
\RightLabel{\scriptsize{\ref{skolem-lemma}-\ref{erbranch}}}
\BinaryInfC{$\ER{c}$}
\RightLabel{\scriptsize{\ref{thm-ramsey}}}
\BinaryInfC{$\RAMPAIR{c}$}
\end{prooftree}
\end{center}
\caption{Formal proof of Ramsey's theorem.}
\label{fig-prooftree}
\end{figure}

It is important to remark why we have chosen this proof over Ramsey's seemingly simpler proof in \cite{Ramsey(30)}. Ramsey constructs an infinite min-monochromatic branch directly using dependent choice: First we define $a0=0$, then we use $\IPP$ to produce an infinite set $A_1\subseteq\NN\backslash {0}$ that is monochromatic under $c_{0}(i)=c(0,i)$ and define $a(1)=\min A_0$. Next use $\IPP$ to produce an infinite set $A_2\subseteq A_1\backslash {a(1)}$ that is monochromatic under $c_{a(1)}(i)=c(a(1),i)$ and define $a(2)=\min A_1$ and so on. It is easy to see that the resulting $a$ is min-monochromatic. However, Ramsey's construction uses dependent choice of type $1$ (Simpson shows  in \cite{Simpson(99)} that it cannot be formalised in the subsystem $\ACA$), therefore its computational interpretation would seemingly involve bar recursion/product of selection functions of level $1$. Our interpretation of the Erd\H{o}s-Rado proof, on the other hand, makes use of the product of selection functions of lowest type only, meaning that our construction is computationally simpler.

\begin{definition}[Erd\H{o}s/Rado tree] Given a colouring $c \colon \NN^2 \to \BB$, define a partial order $\prec$ on $\NN$ recursively as follows:
\begin{enumerate}
\item $0\prec 1$
\item Given that $\prec$ is already defined on the initial segment of the natural numbers $\is{j}$, for $j<i$ define
	\[ j \prec i \quad \mbox{iff} \quad c(k, i)=c(k, j), \ \mbox{for all } k \prec j \]
\end{enumerate}
\end{definition}

It is easy to show that $\prec$ defines a tree on $\NN$, the so-called Erd\H{o}s/Rado tree, and that its branches are min-monochromatic i.e. $c(k,i)=c(k,j)$ for $k\prec i\prec j$. Moreover, the tree is binary branching because $i$ and $j$ are successors of $k$ if and only if $c(k, i) \neq c(k, j)$. For proofs of these facts see \cite[Section 4]{Kreuzer(2009)}. We consider the following $\Sigma^0_1$ tree.  

\begin{definition}[Binary Erd\H{o}s/Rado tree]\label{bin-ertree} Define the $\Sigma^0_1$-predicate $T$ on $\BB^*$ by
\eqleft{T(s) := \exists k (\underbrace{\exists k' \!\in\! [|s|, k] \, \forall i < |s| (s_i = 0 \;\leftrightarrow\; i \prec k')}_{\mt(s, k)}).}
\end{definition}

A 0-1 sequence $s$ belongs to $T$ if it is the characteristic function of a finite branch of the Erd\H{o}s/Rado tree. We use a $k$ and a $k'$ in order to make $T(s)$ a $\Sigma^0_1$-predicate \emph{monotone} on unbounded quantifier $k$. This will simplify the construction.

\begin{lemma} The following are simple properties of $T$
\begin{itemize}
\item[$(i)$] $T$ as defined above is an infinite tree. 
\item[$(ii)$] The branches of $T$ are characteristic functions of branches of the E/R tree.
\item[$(iii)$] $T$ satisfies the following monotonicity conditions\footnote{It will become clear in Section \ref{sec-extract} why we require our tree to have these properties.}:
\begin{equation*} \label{bw-hypothesis}
(\mona) \;\; \mt(s * t, k) \to \mt(s, k) \quad\quad \mbox{and} \quad\quad (\monb) \;\; \mt(s, k) \to \mt(s, k+l).
\end{equation*}
\end{itemize}
\end{lemma}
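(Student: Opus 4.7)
The proof splits into three parts; (iii) is essentially an unpacking of definitions, (i) piggy-backs on (M1), and the substantive content lies in (ii), where the main obstacle is to show that the set of zeros of a branch of $T$ is infinite.

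For (iii), both monotonicity conditions fall out of the form of $\mt$. If $\mt(s*t,k)$ holds with witness $k'\in[|s*t|,k]$, then $|s|\le|s*t|\le k'$, so the same $k'$ lies in $[|s|,k]$, and the biconditional $s_i=0\leftrightarrow i\prec k'$ is unaffected for $i<|s|$ because $s$ and $s*t$ agree on those indices; this yields (M1). For (M2), the interval $[|s|,k]$ is contained in $[|s|,k+l]$, so any witness for $\mt(s,k)$ also witnesses $\mt(s,k+l)$.

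For (i), closure of $T$ under initial segments is just (M1) applied to $T(s*t)\to T(s)$. For infinitude, given $n$ I pick any $k'\ge n$ and set $s\in\BB^n$ by $s_i=0\iff i\prec k'$; then $\mt(s,k')$ holds with $k=k'$, producing an element of $T$ of length $n$.

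For (ii), let $\alpha$ be an infinite branch of $T$ and put $A=\{i:\alpha_i=0\}$. The chain property of $A$ is easy: for $i_1<i_2$ in $A$, invoking $T(\initSeg{\alpha}{i_2+1})$ yields a $k'\ge i_2+1$ with $i_1,i_2\prec k'$; since the $\prec$-ancestors of any node form a chain and $j\prec k$ implies $j<k$, this forces $i_1\prec i_2$. The main obstacle is showing $A$ is infinite. Suppose instead that $i_m=\max A$. Then for every $n>i_m$, the witness $k'\ge n$ arising from $T(\initSeg{\alpha}{n})$ must have $i_m$ as an ancestor but no element of $(i_m,n)$ among its ancestors. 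Hence the $\prec$-successor of $i_m$ on the path from the root to $k'$ (which may be $k'$ itself) is a child of $i_m$ in the E/R tree and is $\ge n$; choosing $n$ larger than the finitely many (at most two) children of $i_m$ gives a contradiction. Thus $A$ is an infinite $\prec$-chain, i.e.\ a branch of the E/R tree. Conversely, the characteristic function of any infinite $\prec$-chain $B$ is a branch of $T$: for each $n$ any $k'\in B$ with $k'\ge n$ witnesses $\mt(\initSeg{\alpha}{n},k')$.
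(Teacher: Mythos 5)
Your parts (i) and (iii) are correct and essentially the paper's own argument: (M1) and (M2) are read off from the shape of $\mt$ (the same witness $k'$ still lies in the enlarged interval, and only a \emph{bound} on $k'$ is demanded), and infinitude of $T$ is witnessed by the length-$n$ sequence $s_i=0\iff i\prec k'$ for a fixed $k'\ge n$ (the paper simply takes $k'=n$). Where you genuinely diverge is (ii). The paper reads ``branches of $T$'' as the finite sequences $s$ with $T(s)$ and disposes of (ii) in one line: the zero-set of such an $s$ is an initial segment of the predecessor chain $pd(k')$ of the witness $k'$, hence a branch of the E/R tree. You instead prove the stronger statement about \emph{infinite} branches $\alpha$, in particular that the zero-set $A$ is infinite, by observing that a finite $A$ with maximum $i_m$ would force $i_m$ to have a child above every $n$, contradicting binary branching. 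That argument is correct, modulo two points you gloss over: $A\neq\emptyset$ (since $0\prec k'$ for every witness $k'\ge 1$), and downward closure of $A$ under $\prec$, which is needed for $A$ to literally be a branch and follows from transitivity together with the biconditional; also, your converse for arbitrary $\prec$-chains requires the chain to be downward closed, though the lemma asks for no converse. In effect you are anticipating Lemma \ref{build-a}: there the paper proves the infinitude of zeros for the specific $\alpha$ produced by $\WKL$, but \emph{quantitatively} --- the next zero after $n$ is found below $\beta(\beta n+1)$ --- because that explicit bound is exactly what the program extraction consumes later (Remark \ref{build-a-remark} and Lemma \ref{swkl-lemma2}). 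Your pigeonhole on the children of $i_m$ gives a clean, self-contained classical proof of the full statement of (ii), but yields no such bound; the paper's division of labour (a one-line (ii) now, the effective and bounded version in Lemma \ref{build-a}) is what the finitisation actually needs.
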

\begin{proof} $(i)$ Clearly $T$ is prefix closed. Moreover, for all $n$, $T(s)$ has a branch $s$ of length $n$ given by $s_i = 0$ iff $i \prec n$, for $i < n$. $(ii)$ $T(s)$ implies that the set defined by $s$ is an initial segment of the branch of the predecessors of $k'$, denoted $pd(k')$, of the E/R tree. Therefore is also a branch of the E/R tree. $(iii)$ $(\mona)$ is obvious, and $(\monb)$ follows because we only ask for a \emph{bound} on $k'$. 
\end{proof}

The first step in our proof is to prove the existence of a function $\beta$ which will allow us to turn the $\Sigma^0_1$-tree $T(s)$ into a decidable tree.

\begin{lemma}[Monotone Skolem function]\label{skolem-lemma} There exists a function $\beta$ such that
\begin{equation} \label{skolem-function-beta}
\forall n \forall s (|s| = n \wedge \exists k \mt(s, k) \to \mt(s, \beta n)).
\end{equation}
\end{lemma}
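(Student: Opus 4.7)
The plan is to build $\beta$ by classical choice followed by a finite maximum. Since at each length $n$ there are only $2^n$ binary sequences, once each has a witness $k$ we can combine these witnesses with $(\monb)$.

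First, I would observe that in classical logic $\PAomega$ proves
\[ \forall s \exists k \forall k' (\mt(s,k) \vee \neg \mt(s,k')): \]
if $\exists k\, \mt(s,k)$ then any such $k$ makes the left disjunct true, and otherwise $k=0$ suffices since $\neg \mt(s,k')$ holds for every $k'$. Because $\mt$ is quantifier-free, the matrix $\mt(s,k) \vee \neg \mt(s,k')$ is quantifier-free as well. Hence $\pCA$ applies and yields a function $\gamma : \BB^* \to \NN$ such that for every $s$,
\[ \mt(s, \gamma(s)) \ \vee \ \forall k'\, \neg \mt(s, k'). \]
In particular, whenever $\exists k\, \mt(s,k)$ actually holds the second disjunct fails and so $\mt(s, \gamma(s))$ holds.

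Next I would define $\beta$ by primitive recursion on $n$ as
\[ \beta(n) := \max\{\, \gamma(s) \ \colon \ s \in \BB^n\,\}, \]
which is well defined in $\systemT$ since it is a finite maximum over $2^n$ values. Then for any $s$ of length $n$ with $\exists k\, \mt(s,k)$ we have $\mt(s, \gamma(s))$ from the previous step, and $\gamma(s) \leq \beta(n)$ by construction, so monotonicity $(\monb)$ gives $\mt(s, \beta(n))$ as required.

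There is no real combinatorial obstacle here; the only subtle point is that the existence of the Skolem function $\gamma$ is genuinely non-constructive (it is exactly where $\pCA$ is consumed), and this is precisely why the $\Sigma^0_1$-tree $T$ was defined with the monotonicity conditions $(\mona)$--$(\monb)$: those conditions are what allow us to replace the non-effective pointwise choice $\gamma(s)$ by the single, length-dependent bound $\beta(n)$ without losing correctness. This step is where the interpretation in Section~\ref{sec-extract} will later have to replace $\pCA$ by a product of selection functions.
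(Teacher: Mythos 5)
Your proof is correct: the classical case distinction, the application of $\pCA$ to the quantifier-free (decidable, since all quantifiers in $\mt$ are bounded) matrix $\mt(s,k)\vee\neg\mt(s,k')$, and the passage from the pointwise $\gamma(s)$ to $\beta(n)=\max\{\gamma(s)\colon s\in\BB^n\}$ via $(\monb)$ all go through, with $\BB^*$ coded into $\NN$ so that the choice schema applies as stated. Your route is organised differently from the paper's, though: there the two combination steps occur in the opposite order. The paper first uses \emph{bounded collection} (together with the monotonicity of $\mt$) inside the scope of $\forall n$ to merge the $2^n$ witnesses for the sequences of a given length into a single $k'$, and only then applies countable choice for $\Pi^0_1$-formulas, so that the index of the choice application is the length $n$ rather than the sequence $s$. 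Your version is more elementary at the merging step, replacing the appeal to bounded collection by an explicit primitive-recursive maximum available in $\systemT$; what the paper's ordering buys is a cleaner fit with the later program extraction, where applying choice at index $n$ means the approximate Skolem function $\beta$ is realised directly by a product of selection functions indexed by lengths, with the finite search over $\BB^n$ absorbed into the selection functions $\delta_n$ of Lemmas \ref{swkl-sel} and \ref{ca-approx}, whereas your decomposition would extract a realiser for $\gamma$ indexed by sequences followed by a maximum, i.e.\ a differently structured program. One small slip in your closing remark: only $(\monb)$ is needed here; $(\mona)$ plays its role elsewhere in the development.
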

\begin{proof} Classically we have that
\[ \forall n \forall s (|s| = n \to \exists k'(\exists k \mt(s, k) \to \mt(s, k'))). \]
By \emph{bounded collection} and monotonicity of $\mt$ we have
\[ \forall n \exists k' \forall s (|s| = n \wedge (\exists k \mt(s, k) \to \mt(s, k'))). \]
Finally, by \emph{countable choice} for $\Pi^0_1$-formulas we obtain the function $\beta$.
\end{proof}

\vspace{3mm}

The Skolem function $\beta$ allows us to turn the $\Sigma^0_1$-predicate $T(s)$ into a decidable predicate:

\begin{corollary} Given a function $\beta$ satisfying (\ref{skolem-function-beta}) we have that $T(s)$ is equivalent to
\[ \underbrace{\exists k \!\in\! [|s|, \beta(|s|)] \, \forall i < |s| (s_i = 0 \;\leftrightarrow\; i \prec k)}_{T^\beta(s)}. \]
\end{corollary}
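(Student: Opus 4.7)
The statement is essentially definitional once one unpacks $T^\beta(s)$ and compares it to the formula $\mt(s,k)$. My plan is to first observe that, up to renaming the bound variable $k' \leftrightarrow k$, the formula $T^\beta(s)$ is literally $\mt(s,\beta(|s|))$. This identification turns the equivalence into two one-line implications.

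For the easy direction, I would argue that $T^\beta(s)$ trivially implies $T(s)$: if $\mt(s,\beta(|s|))$ holds, then witnessing the outermost existential in $T(s)$ by $k := \beta(|s|)$ immediately gives $\exists k\,\mt(s,k)$, which is $T(s)$.

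For the converse, I would invoke the defining property (\ref{skolem-function-beta}) of the Skolem function $\beta$. Assuming $T(s)$, i.e.\ $\exists k\,\mt(s,k)$, and setting $n := |s|$, (\ref{skolem-function-beta}) yields $\mt(s,\beta(|s|))$, which under the relabeling is exactly $T^\beta(s)$.

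There is no substantial obstacle here; the whole point of Lemma \ref{skolem-lemma} was to ensure precisely this reduction from $\Sigma^0_1$ to decidable, and the corollary just records the consequence. The only thing worth being careful about is the renaming of the bound variable and observing that the bounds $[|s|,\beta(|s|)]$ on $k$ in $T^\beta(s)$ coincide with the bounds $[|s|,k]$ on $k'$ in $\mt(s,k)$ when $k$ is instantiated to $\beta(|s|)$.
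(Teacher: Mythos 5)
Your proof is correct and is exactly the intended argument: the paper states this corollary without proof precisely because it is the immediate unfolding you describe, namely that $T^\beta(s)$ is $\mt(s,\beta(|s|))$ up to renaming the bound variable, the forward direction follows by instantiating the existential in $T(s)$ with $k:=\beta(|s|)$, and the converse is a direct application of property (\ref{skolem-function-beta}) with $n:=|s|$.
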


Once we have a decidable infinite finitely branching tree $T^\beta(s)$ we can apply weak K\"onig's lemma to obtain an infinite path in the tree.

\begin{lemma} \label{wkl-used} There exists an infinite sequence $\alpha$ such that 
\begin{equation} \label{wkl-result}
\forall n \underbrace{\exists k \!\in\! [n, \beta n] \, \forall i < n (\alpha(i) = 0 \;\leftrightarrow\; i \prec k)}_{T^\beta(\initSeg{\alpha}{n})}.
\end{equation}
\end{lemma}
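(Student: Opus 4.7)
The plan is to obtain $\alpha$ as a direct application of weak K\"onig's lemma to the tree $T^\beta$ introduced in the preceding corollary, so the work consists mainly in verifying that $T^\beta$ satisfies the hypotheses of $\WKL$ (infinite, decidable, prefix-closed, binary).

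First I would note that $T^\beta$ is a decidable predicate on $\BB^\ast$: the inner formula $\forall i < |s|(s_i = 0 \leftrightarrow i \prec k)$ is quantifier-free (given that the order $\prec$ restricted to $\is{k}$ is decidable by its recursive definition), and the quantifier on $k$ is bounded. Being defined on $\BB^\ast$ it is automatically binary.

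Next I would check prefix closure and infiniteness by transferring them from $T$ using the equivalence $T(s) \leftrightarrow T^\beta(s)$ given by the corollary. For prefix closure, suppose $T^\beta(s \ast t)$; then $T(s \ast t)$, and by monotonicity property $(\mona)$ applied to the witness $k$, we have $\mt(s,k)$, so $T(s)$ and hence $T^\beta(s)$. For infiniteness, part $(i)$ of the previous lemma gives, for each $n$, some $s$ of length $n$ with $T(s)$, and this $s$ again satisfies $T^\beta(s)$ by the corollary.

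Having verified these conditions I would apply $\WKL$ to the decidable binary tree $\{s \in \BB^\ast : T^\beta(s)\}$ to obtain an infinite branch $\alpha$ satisfying $T^\beta(\initSeg{\alpha}{n})$ for every $n$, which is exactly the conclusion of the lemma. I do not anticipate any real obstacle here: the bulk of the actual work was done in Lemma \ref{skolem-lemma} and its corollary, whose role is precisely to bring $T$ into a form suitable for $\WKL$; this lemma just collects the payoff.
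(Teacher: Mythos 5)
Your proof is correct and is essentially the paper's own argument: the paper's proof of this lemma is literally the single line ``By weak K\"onig's lemma'', applied to the decidable binary tree $T^\beta$ obtained from Lemma \ref{skolem-lemma} and its corollary. Your additional verification that $T^\beta$ is decidable, prefix-closed and has branches of every length (transferred from $T$ via the corollary and the monotonicity property $(\mona)$) is exactly the routine checking the paper leaves implicit.
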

\begin{proof} By \emph{weak K\"onig's lemma}.
\end{proof}

However, it remains to show that an infinite branch of $T$ encodes an infinite branch of the Erd\H{o}s/Rado tree.

\begin{lemma} \label{build-a} The sequence $\alpha$ has infinitely many zeros, i.e. it is the characteristic function of an infinite set. More specifically, we can construct a function $a\colon\NN\to\NN$ that returns the first $k\geq n$ with $\alpha(k)=0$.
\end{lemma}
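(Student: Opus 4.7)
The plan is to argue by contradiction that $\alpha$ has infinitely many zeros, from which the definition of $a$ will follow by unbounded search. Suppose on the contrary that $\alpha$ has only finitely many zeros, and let $p := \max\{i : \alpha(i)=0\}$; this set is nonempty since instantiating (\ref{wkl-result}) at $n=1$ forces $\alpha(0) = 0$ (because $0 \prec k$ for every $k \geq 1$). For each $n > p$, Lemma \ref{wkl-used} supplies some $k_n \in [n, \beta n]$ whose predecessors in $[0,n)$ coincide with the fixed set $Z := \{i \leq p : \alpha(i) = 0\}$.

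The key combinatorial observation is that the chain of predecessors of $k_n$ in the E/R tree, $0 = q_0 \prec q_1 \prec \cdots \prec q_\ell$, has strictly increasing labels by definition of $\prec$, and the elements of this chain lying in $[0,n)$ are exactly $Z$, whose maximum is $p$. Thus either $q_\ell = p$ and $k_n$ itself is a child of $p$ with $k_n \geq n$, or the smallest index $j$ with $q_j \geq n$ satisfies $q_{j-1} = p$, making $q_j$ a child of $p$ with $q_j \geq n$. In either case $p$ admits a child in the E/R tree with value $\geq n$. Since this holds for arbitrarily large $n$, the node $p$ would have unboundedly many children, contradicting the fact that the E/R tree is binary branching.

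Therefore $\alpha$ possesses infinitely many zeros, and we may simply define
\begin{equation*}
a(n) := \min\{m \geq n : \alpha(m) = 0\},
\end{equation*}
which terminates thanks to the argument above and is primitive recursive in $\alpha$. The main difficulty is the combinatorial step that exploits binary branching of the E/R tree; the rest is routine bookkeeping.
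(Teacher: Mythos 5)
Your argument is classically correct, but it takes a genuinely different route from the paper, and the difference matters for the rest of the development. The paper does not argue by contradiction: it shows directly that for every $n$ there is a zero of $\alpha$ in the explicit window $[n,\beta(\beta n+1)]$, by instantiating (\ref{wkl-result}) at just the three points $n$, $\beta n+1$ and $\beta(\beta n+1)+1$ to obtain $k,k',k''$ all lying above the largest zero $i<n$, and then using binary branching to force two of them to be comparable, whence $\alpha(k)=0$ or $\alpha(k')=0$. This yields the bounded definition $a(n)=\mu k\in[n,\beta(\beta n+1)]\,(\alpha(k)=0)$, which is primitive recursive in $\alpha$ and $\beta$, and, crucially, Remark \ref{build-a-remark}: the verification consumes (\ref{wkl-result}) only up to $\max\{n,\beta n+1,\beta(\beta n+1)+1\}$. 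Your contradiction argument is sound as far as it goes: the predecessors of $k_n$ below $n$ are exactly the zeros below $n$, consecutive elements of the predecessor chain are parent and child, and a node $p$ with children of unboundedly large value indeed contradicts binary branching; also $\alpha(0)=0$ is justified correctly. But it buys you only the bare infinitude of zeros: the resulting $a(n)=\min\{m\geq n : \alpha(m)=0\}$ is given by unbounded search (so it is merely provably total, not primitive recursive in $\alpha$ as you claim), and the argument uses (\ref{wkl-result}) at arbitrarily large $n$. In the extraction of Section \ref{sec-extract} the sequence $\alpha$ satisfies $T^\beta(\initSeg{\alpha}{n})$ only for $n\leq\omega\alpha\beta$, so an argument quantifying over all $n$ is unavailable there; it is precisely the explicit bound $\beta(\beta n+1)$ and the finite-usage bookkeeping of Remark \ref{build-a-remark} that are reused verbatim in Lemma \ref{swkl-lemma2} and in the definition of $\omega$ in Section \ref{explicit}. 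So your proof establishes the lemma as stated, but the paper's quantitative version is the one the finitisation actually needs.
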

\begin{proof} Define $a(n)$ as
\eqleft{a(n) =
\left\{
\begin{array}{ll}
	0 & \mbox{if} \; n = 0 \\[2mm]
	k & \mbox{for least $k \in [n, \beta(\beta n + 1)]$ such that $\alpha(k) = 0$}.
\end{array}
\right.
}
We show that $a$ is well-defined, so that in fact $\alpha(a(n)) = 0$ for all $n$. Because the image of $a$ is unbounded the result follows. We must have that $\alpha(0)=0$ by definition of $\prec$. Now given $n > 0$, let $i < n$ be the largest such that $\alpha(i) = 0$. Consider $k \in [n, \beta n]$ which by (\ref{wkl-result}) satisfies $\forall i < n (\alpha(i) = 0 \;\leftrightarrow\; i \prec k)$; and hence $i \prec k$. Now, let $n$ be $\beta n + 1$ in (\ref{wkl-result}) so that we have a $k' \in [\beta n + 1, \beta(\beta n + 1)]$ satisfying $\forall i < n (\alpha(i) = 0 \;\leftrightarrow\; i \prec k')$; and hence $i \prec k'$ as well. Finally, let $n$ be $\beta(\beta n + 1) + 1$ in (\ref{wkl-result}) so that we have a $k'' \in [\beta(\beta n + 1) + 1, \beta(\beta(\beta n + 1) + 1)]$ satisfying $\forall i < n (\alpha(i) = 0 \;\leftrightarrow\; i \prec k'')$; so that also $i \prec k''$. Since we have $i \prec k$ and $i \prec k'$ and $i \prec k''$, it follows that either $k \prec k'$ or $k \prec k''$ or $k' \prec k''$, since the Erd\H{o}s/Rado tree is binary branching. Hence, either $\alpha(k) = 0$ or $\alpha(k') = 0$, and either way there is some $l\in [n,\beta(\beta n + 1)]$ with $\alpha(l)=0$.\end{proof}

\begin{remark}\label{build-a-remark}Note that in verifying that $\alpha(a(n))=0$ we have only used (\ref{wkl-result}) up to the point $\max\{n,\beta n+1,\beta(\beta n+1)+1\}$. We use this fact later to show that a sufficiently large approximation of $\alpha$ is sufficient for an approximation of $a$. \end{remark}

\begin{remark} \label{remark-ncolours}For the $n$ colour case the Erd\"{o}s/Rado tree is still \emph{finitely} branching but not \emph{binary} branching as it is for case of two colours $n=2$. This in particular means that generalising the proof of Lemma \ref{build-a} for arbitrarily many colours is non-trivial (although still routine), and the construction of the function $a$ and the bound in Remark \ref{build-a-remark} are more complex (involving further iterations of $\beta$). Note, however, that the tree $T(s)$ would still be \emph{binary} branching, even in the case of $n$ colours, as $T(s)$ means that $s$ is the ``characteristic function" of a branch in the Erd\"{o}s/Rado tree. In particular, only the weak form of K\"onig's lemma is required in the general case as well. \end{remark}

\begin{corollary}\label{erbranch}There exists an infinite set that is min-monochromatic under the colouring $c \colon \NN^2 \to \BB$.\end{corollary}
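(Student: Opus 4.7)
The plan is to assemble the pieces already in place: Lemma~\ref{build-a} gives an everywhere-defined function $a$ whose values are zeros of $\alpha$, and property~$(ii)$ of $T$ asserts that the zero-set of any branch of $T$ is a branch of the Erd\H{o}s/Rado tree, which in turn is min-monochromatic by construction. The corollary then follows by taking the appropriate image set.

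More concretely, I would first let $S := \{a(n) : n \in \NN\}$. By Lemma~\ref{build-a} we have $\alpha(a(n)) = 0$ for every $n$, so $S \subseteq \{k : \alpha(k) = 0\}$; conversely, for any zero $k$ of $\alpha$ one has $a(k) = k$ (since $k$ itself is the least $k' \geq k$ with $\alpha(k') = 0$), so the two sets coincide. Because $a(n) \geq n$ holds by construction, $S$ is unbounded and hence infinite.

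Next I would appeal to part $(ii)$ of the lemma on $T$: the branch $\alpha$ of $T$ obtained in Lemma~\ref{wkl-used} is the characteristic function of a branch of the E/R tree, so $S$ is a branch of the E/R tree in the sense of the partial order $\prec$. Since branches of the E/R tree are min-monochromatic (i.e.\ $c(k,i) = c(k,j)$ whenever $k \prec i \prec j$), the set $S$ is an infinite min-monochromatic subset of $\NN$, which is exactly the statement of the corollary.

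There is no real obstacle here — the corollary is a direct consequence of chaining the lemmas together, and the only mild subtlety is the observation that the image of $a$ coincides with the zero-set of $\alpha$, which is immediate from the ``first $k \geq n$'' definition of $a$. All the actual combinatorial work has already been done in Lemmas~\ref{skolem-lemma}, \ref{wkl-used} and \ref{build-a}.
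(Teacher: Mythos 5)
Your proposal is correct and follows essentially the same route as the paper: infinitude of $\{a(n):n\in\NN\}$ from $a(n)\geq n$, and min-monochromaticity because the zeros of $\alpha$ form (an initial part of) a branch of the Erd\H{o}s/Rado tree, whose branches are min-monochromatic by construction. The only cosmetic difference is that you route through property $(ii)$ of the lemma on $T$ (plus the harmless observation that the image of $a$ is exactly the zero-set of $\alpha$), whereas the paper applies (\ref{wkl-result}) directly at $n=aj+1$ to conclude $ak\prec ai\prec aj$ for $ak<ai<aj$ — the same reasoning in slightly different packaging.
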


\begin{proof}Clearly the set $\{an \ \colon \ n\in\NN\}$ is infinite. Moreover for $ak<ai<aj$ it follows from (\ref{wkl-result}) for $n=aj+1$ that $ak\prec ai\prec aj$, and therefore $c(ak,ai)=c(ak,aj)$.\end{proof}

All that remains is to apply the infinite pigeonhole principle to the min-monochromatic branch given by $a$.

\begin{theorem}[Ramsey's theorem]\label{thm-ramsey} For every colouring $c \colon \NN^2 \to \BB$
\[ \exists x^{\BB} \exists F^{\NN\to\NN} \forall k (F k \geq k \wedge \forall i, j \leq k (Fi < Fj \to c(Fi,Fj)=x)). \]
\end{theorem}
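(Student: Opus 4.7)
The plan is to derive the theorem as an almost immediate consequence of Corollary \ref{erbranch} together with the infinite pigeonhole principle $\IPP$. Corollary \ref{erbranch} produces an infinite set $\{an : n \in \NN\}$ that is min-monochromatic under $c$, in the sense that $c(ak, ai) = c(ak, aj)$ whenever $ak < ai < aj$. All that is left is to extract from this min-monochromatic set a pairwise monochromatic subset, which is a standard pigeonhole argument.

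First I would pass to a strictly increasing enumeration $b \colon \NN \to \NN$ of the range $\{an : n \in \NN\}$, which exists because the range is infinite. The min-monochromatic property then takes the cleaner form: for all $i < j < k$, $c(b(i), b(j)) = c(b(i), b(k))$. Next, I define a 2-colouring $\chi \colon \NN \to \BB$ by $\chi(i) := c(b(i), b(i+1))$ and apply $\IPP$ to $\chi$ to obtain a colour $x \in \BB$ together with a function $p \colon \NN \to \NN$ such that $pk \geq k$ and $\chi(pk) = x$ for all $k$. Without loss of generality, by replacing $p$ with the iteration $q(0) := p(0)$, $q(k+1) := p(q(k)+1)$ if necessary, we may assume that $p$ is strictly increasing.

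Finally, I set $F(k) := b(p(k))$. Strict monotonicity of $b$ and $p$ gives $Fk \geq k$, and for any $i, j \leq k$ with $Fi < Fj$ one has $p(i) < p(j)$, so the min-monochromatic property yields
\[ c(Fi, Fj) = c(b(p(i)), b(p(j))) = c(b(p(i)), b(p(i)+1)) = \chi(p(i)) = x, \]
which is precisely the required monochromaticity condition.

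Since all the genuinely non-trivial content (weak K\"onig's lemma, $\pCA$, and the construction of the min-monochromatic branch) has already been discharged in the preceding lemmas, I do not anticipate any real obstacle. The only mild care required is bookkeeping around the two passages to strictly increasing functions ($a \mapsto b$ and $p \mapsto q$) and keeping track of the strict inequalities needed to invoke min-monochromaticity; both are entirely routine.
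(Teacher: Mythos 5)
Your proposal is correct and follows essentially the same route as the paper: take the min-monochromatic set from Corollary \ref{erbranch}, colour each element by the colour it forms with its successor in the set (the paper's $c'(n)=c(a(n),a(a(n)+1))$ is exactly your $\chi$), apply $\IPP$, and compose to get $F$. Your extra normalisations (strictly increasing enumeration $b$ and strictly increasing $p$) are harmless bookkeeping that the paper avoids by working directly with $a$ and the conditions $an\geq n$, $pn\geq n$.
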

\begin{proof} Let $a$ be as in the previous lemma. Clearly $a n \geq n$, so the image of $a$ is an infinite set. Moreover,
\begin{equation} \label{a-mon}
c(a(k), a(i)) = c(a(k), a(j)),
\end{equation}
whenever $a(k) < a(i)$ and $a(k) < a(j)$, by (\ref{wkl-result}) and definition of $a$. Finally, define a couloring $c' \colon \NN \to \BB$ as $c'(n) = c(a(n), a(a(n)+1))$. By the \emph{infinite pigeon-hole principle} we have a $p$ and an $x$ such that $p(n) \geq n$ and
\[ x = c'(p i) = c(a(p i), a(a(pi) + 1)) \stackrel{(\ref{a-mon})}{=} c(a(p i), a(p j)),  \]
for $a(p i) < a(p j)$. Hence, $F(i) = a(p i)$ does the job.
\end{proof}

%%%%%%%%%%%%%%%%%%%%%%%%%%%%%%%%%%%%%%%%%%%%%%%%
%%%%%%%%%%%%%%%%%%%%%%%%%%%%%%%%%%%%%%%%%%%%%%%%
\section{The Product of Selection Functions}
%%%%%%%%%%%%%%%%%%%%%%%%%%%%%%%%%%%%%%%%%%%%%%%%
%%%%%%%%%%%%%%%%%%%%%%%%%%%%%%%%%%%%%%%%%%%%%%%%
\label{sec-selection}

It is well known that just as Peano arithmetic has a dialectica interpretation in the primitive recursive functionals of finite type $\systemT$, classical analysis (i.e. $\PAomega+\CA$) has a dialectica interpretation in the bar recursive functionals $\systemT+\BR$, where $\BR$ is the bar recursor introduced by Spector in his fundamental paper \cite{Spector(62)}.

Spector's bar recursion is rather abstruse, and the operational behaviour of programs that make use of this kind of recursion tends to be quite difficult to understand. This was not originally an issue, as Spector's aim was simply to obtain a relative consistency proof for analysis. However, when using the dialectica interpretation to extract programs from proofs in analysis, it is sensible to ask whether there is a more intuitive alternative to bar recursion that facilitates a better understanding of these programs.

In \cite{EO(2010A)}, the first author and Escard\'{o} propose the \emph{product of selection functions} as a (computationally equivalent) alternative to bar recursion. In contrast to bar recursion, the product of selection functions is a versatile construction that seems to appear naturally in a variety of different contexts in mathematics and computer science, such as fixed point theory, algorithms and game theory. As such, extracted programs that make use of the product tend to be more illuminating.

In this section we briefly outline the main results that will be used in Section \ref{sec-extract}, and provide some motivation as to why we prefer the product over bar recursion. The reader is encouraged to consult the survey paper \cite{EO(2011A)} and a recent paper on the extraction of programs from proofs using selection functions \cite{OP(2012)} for further details and discussion.

We call \emph{selection function} any element of type $J_RX:=(X\to R)\to X$. Given a selection function $\varepsilon \colon (X \to R) \to X$ we denote by $\selEmb{\varepsilon} \colon (X \to R) \to R$ the functional $\selEmb{\varepsilon}(p) \stackrel{R}{=} p(\varepsilon p)$.
%
%\begin{enumerate}

%\item Hilbert's epsilon operators $\varepsilon_{\forall},\varepsilon_{\exists}\colon J_\BB X$, which exist by the law of excluded middle and satisfy $\forall xp(x)\Leftrightarrow p(\varepsilon_{\forall}p)$ and $\exists xp(x)\Leftrightarrow p(\varepsilon_{\exists}p)$.

%\item The operators $\argmax, \argmin\colon J_{[0,1]}X$ that select an argument on which $f\colon X\to [0,1]$ attains its maximum or minimum.

%\item Strategies for sequential games. We can construct selection functions $\varepsilon_n\colon J_{\{1,0,-1\}}{\bf 9}$ that at round $n$ of Noughts and Crosses, given the effect of each potential move on the outcome ${\bf 9}\to \{1,0,-1\}$ selects an optimal play.

%\end{enumerate}

\begin{definition}[Binary product of selection functions \cite{EO(2009)}] Given a selection function $\varepsilon \colon J_RX$ and family of selection functions $\delta_x  \colon J_RY$ and a predicate $q\colon X\times Y\to R$, let
\begin{align*} B[x^X] &\stackrel{Y}{:=} \delta(x, \lambda y.q(x,y)) \\ a &\stackrel{X}{:=} \varepsilon(\lambda x.q(x,B[x])).\end{align*}
The binary product $\varepsilon\otimes\delta$ of the selection functions $\varepsilon$ and $\delta$ is another selection function, of type $J_R(X\times Y)$, defined by
\[(\varepsilon\otimes\delta)(q) \stackrel{X\times Y}{:=} \pair{a,B[a]}.\]
\end{definition}

As described in \cite{EO(2009)}, we can iterate the binary product of selection functions an unbounded number of times, where the length of the iteration is dependent on the output of the product in the following sense.

\begin{definition}[Iterated product of selection functions \cite{EO(2009)}] \label{unbounded} Suppose we are given a family of selection functions $(\varepsilon_s\colon J_R X)$, where $s \colon X^*$. The \textit{explicitly controlled unbounded product} of the selection functions $\varepsilon_s$ is defined by the recursion schema
\begin{equation} \label{Prod}
\EPS{s}{\omega}{\varepsilon}(q) \stackrel{X^{\NN}}{=}
\left\{
\begin{array}{ll}
	{\bf 0} & {\rm if } \; \omega(\ext{s})<|s| \\[2mm]
	(\varepsilon_s \otimes \lambda x . \EPS{s * x}{\omega}{\varepsilon})(q) & {\rm otherwise}
\end{array}
\right.
\end{equation}
where $s \colon X^*$, $q \colon X^{\NN} \to R$ and $\omega \colon X^{\NN} \to \NN$.
\end{definition}

When $\omega$ is a constant function, say $\omega \alpha = n$, this corresponds to a finite iteration of the binary product. The functional $\omega$ acts as a control, terminating the recursion once it has produced a sequence $s$ satisfying $\omega(\ext{s})<|s|$. By simply unwinding the definition of the binary product in (\ref{Prod}) we obtain an equivalent equation
\begin{equation} \label{Prod-var}
\EPS{s}{\omega}{\varepsilon}(q) \stackrel{X^{\NN}}{=}
\left\{
\begin{array}{ll}
	{\bf 0} & {\rm if } \; \omega(\ext{s})<|s| \\[2mm]
	a_s * \EPS{s * a_s}{\omega}{\varepsilon}(q_{a_s}) & {\rm otherwise}
\end{array}
\right.
\end{equation}
where $a_s = \varepsilon_s(\lambda x . \selEmb{\EPS{s * x}{\omega}{\varepsilon}}(q_x))$, with $q_x(\alpha) = q(x * \alpha)$ and $\selEmb{\delta}(p) \stackrel{R}{=} p(\delta p)$. 

For fixed $\omega, \varepsilon$ and $q$ we should think of $\EPS{s}{\omega}{\varepsilon}(q)$ as computing an infinite extension to any given finite sequence $s$. Hence, we are interested in the sequence $s * \EPS{s}{\omega}{\varepsilon}(q)$. The fundamental property of $\EPSs$ is that the infinite extension of an initial segment $\initSeg{\alpha}{n}$ of a previous infinite extension $\alpha$ is identical to the original infinite extension. Formally:

\begin{lemma}[Main lemma on $\EPSs$] \label{main-lemma} If $\alpha = \EPS{s}{\omega}{\varepsilon}(q)$ then, for all $n$, 
\begin{equation} \label{main-lemma-eq}
\alpha = \initSeg{\alpha}{n} * \EPS{s * \initSeg{\alpha}{n}}{\omega}{\varepsilon}(q_{\initSeg{\alpha}{n}}).
\end{equation}
\end{lemma}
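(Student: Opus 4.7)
The plan is to prove this by induction on $n$. The base case $n=0$ is immediate: $\initSeg{\alpha}{0} = \lempty$ and $q_{\lempty} = q$, so the right-hand side of (\ref{main-lemma-eq}) collapses to $\EPS{s}{\omega}{\varepsilon}(q) = \alpha$.

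For the inductive step I would appeal to equation (\ref{Prod-var}) applied at $s * \initSeg{\alpha}{n}$ and case-split on whether the termination guard $\omega(\ext{s * \initSeg{\alpha}{n}}) < |s * \initSeg{\alpha}{n}|$ fires. In the case where it does not, (\ref{Prod-var}) unfolds to
\[\EPS{s * \initSeg{\alpha}{n}}{\omega}{\varepsilon}(q_{\initSeg{\alpha}{n}}) = a * \EPS{s * \initSeg{\alpha}{n} * a}{\omega}{\varepsilon}(q_{\initSeg{\alpha}{n} * a})\]
for a suitable $a \in X$. Substituting into the inductive hypothesis forces $\alpha(n) = a$, so $\initSeg{\alpha}{n+1} = \initSeg{\alpha}{n} * a$, and the identity at $n+1$ drops out directly.

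The delicate case is when the recursion has already halted, so $\EPS{s * \initSeg{\alpha}{n}}{\omega}{\varepsilon}(q_{\initSeg{\alpha}{n}}) = \cZero$ and hence $\alpha = \initSeg{\alpha}{n} * \cZero$. Here the key observation is that $\alpha(n) = 0$, whence $\initSeg{\alpha}{n+1} = \initSeg{\alpha}{n} * 0$ and $\ext{s * \initSeg{\alpha}{n+1}} = \ext{s * \initSeg{\alpha}{n}}$ as infinite sequences; consequently $\omega$ evaluates to the same natural number at step $n+1$ while the length threshold has only grown by one, so the guard still fires and $\EPS{s * \initSeg{\alpha}{n+1}}{\omega}{\varepsilon}(q_{\initSeg{\alpha}{n+1}}) = \cZero$, giving $\initSeg{\alpha}{n+1} * \cZero = \alpha$. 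This stability of the termination condition under the canonical extension the recursion itself produces is the only non-mechanical ingredient; everything else is unfolding of the defining equation (\ref{Prod-var}).
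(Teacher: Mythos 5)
Your proposal is correct and follows the same route as the paper, which simply states ``induction on $n$'' and defers the details to the cited survey: the base case is trivial, the inductive step unfolds (\ref{Prod-var}), and the only subtle point is exactly the one you isolate, namely that in the halted case $\alpha(n)=0$ and $\ext{s * \initSeg{\alpha}{n+1}} = \ext{s * \initSeg{\alpha}{n}}$, so the guard $\omega(\ext{\cdot})<|\cdot|$ remains triggered. Your write-up supplies precisely the details the paper omits, and both cases are handled correctly.
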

\begin{proof} Induction on $n$. See \cite{EO(2011A)} for details.\end{proof}

This lemma is the main building block behind the proof of the following fundamental theorem about $\EPSs$.

\begin{theorem}[Main theorem on $\EPSs$] \label{eps-main} Let $q \colon X^\NN \to R$ and $\omega \colon X^\NN \to \NN$ and $\varepsilon_s \colon J_R X$ be given. Define
\eqleft{
\begin{array}{lcl}
\alpha & \stackrel{X^\NN}{=} & \EPS{\emptyseq}{\omega}{\varepsilon}(q) \\[2mm]
p_s(x) & \stackrel{R}{=} & \selEmb{\EPS{s * x}{\omega}{\varepsilon}}(q_{s * x}).
\end{array}
}
For $n \leq \omega(\alpha)$ we have
\[
\begin{array}{lcl}
	\alpha(n) & \stackrel{X}{=} & \varepsilon_{\initSeg{\alpha}{n}}(p_{\initSeg{\alpha}{n}}) \\[2mm]
	q \alpha & \stackrel{R}{=} & \selEmb{\varepsilon_{\initSeg{\alpha}{n}}}(p_{\initSeg{\alpha}{n}}).
\end{array}
\]
\end{theorem}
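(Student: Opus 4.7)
The plan is to apply the Main Lemma (Lemma \ref{main-lemma}) with the empty initial sequence, then unroll one step of the recursive equation (\ref{Prod-var}) at the stage $s' := \initSeg{\alpha}{n}$, and finally read off the two claimed identities from the definitions of $p_s$ and $\selEmb{\cdot}$.

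Instantiating Lemma \ref{main-lemma} with $s = \emptyseq$, for every $n$ we have
\[ \alpha = \initSeg{\alpha}{n} \ast \EPS{\initSeg{\alpha}{n}}{\omega}{\varepsilon}(q_{\initSeg{\alpha}{n}}), \]
so writing $s' = \initSeg{\alpha}{n}$, the tail of $\alpha$ from position $n$ is exactly $\EPS{s'}{\omega}{\varepsilon}(q_{s'})$. The next step is to show that the hypothesis $n \leq \omega(\alpha)$ forces the recursive, rather than the terminating, clause of (\ref{Prod-var}) at stage $s'$: if the guard $\omega(\ext{s'}) < |s'|$ held, then $\EPS{s'}{\omega}{\varepsilon}(q_{s'}) = \cZero$, giving $\alpha = s' \ast \cZero = \ext{s'}$ and hence $\omega(\alpha) = \omega(\ext{s'}) < |s'| = n$, a contradiction. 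This guard argument is the only non-syntactic point in the proof, and is the step I expect to need the most care.

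Once the recursive clause is unlocked, (\ref{Prod-var}) gives
\[ \EPS{s'}{\omega}{\varepsilon}(q_{s'}) = a_{s'} \ast \EPS{s' \ast a_{s'}}{\omega}{\varepsilon}(q_{s' \ast a_{s'}}), \quad \text{where } a_{s'} = \varepsilon_{s'}\bigl(\lambda x.\selEmb{\EPS{s' \ast x}{\omega}{\varepsilon}}(q_{s' \ast x})\bigr) = \varepsilon_{s'}(p_{s'}), \]
using $(q_{s'})_x = q_{s' \ast x}$ and the definition of $p_s$. Reading off the first entry of the tail yields the first identity $\alpha(n) = a_{s'} = \varepsilon_{s'}(p_{s'})$. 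For the second, combine the above decomposition with the Main Lemma to obtain $\alpha = s' \ast a_{s'} \ast \EPS{s' \ast a_{s'}}{\omega}{\varepsilon}(q_{s' \ast a_{s'}})$, so
\[ q\alpha = q_{s' \ast a_{s'}}\bigl(\EPS{s' \ast a_{s'}}{\omega}{\varepsilon}(q_{s' \ast a_{s'}})\bigr) = \selEmb{\EPS{s' \ast a_{s'}}{\omega}{\varepsilon}}(q_{s' \ast a_{s'}}) = p_{s'}(a_{s'}) = p_{s'}(\varepsilon_{s'}(p_{s'})) = \selEmb{\varepsilon_{s'}}(p_{s'}), \]
which is the second identity.

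No induction on $n$ is needed beyond the one already carried out inside the Main Lemma; the remaining work is essentially unwinding definitions. The only substantive content is the guard analysis linking $\omega(\alpha)$ to $\omega(\ext{s'})$, so the whole proof fits comfortably in a few lines once Lemma \ref{main-lemma} is in hand.
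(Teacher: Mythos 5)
Your proposal is correct and follows essentially the same route as the paper's own proof: the paper also applies Lemma \ref{main-lemma} at $\initSeg{\alpha}{n}$, rules out the terminating clause via the same contradiction (its step $(*)$, that $n \leq \omega(\initSeg{\alpha}{n} * \cZero)$), and then reads both identities off one unrolling of (\ref{Prod-var}). The guard analysis you flag as the delicate step is exactly the paper's argument, so nothing is missing.
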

\begin{proof} Assume $n \leq \omega(\alpha)$. We argue that $(*) \; n \leq \omega(\initSeg{\alpha}{n} * \cZero)$. Otherwise, assuming $n > \omega(\initSeg{\alpha}{n} * \cZero)$ we would have, by Lemma \ref{main-lemma}, that $\alpha = \initSeg{\alpha}{n} * \cZero$. And hence, $n > \omega(\initSeg{\alpha}{n} * \cZero) = \omega(\alpha) \geq n$, which is a contradiction. Hence, we have that
\eqleft{
\begin{array}{lcl}
	\alpha(n) 
		& \stackrel{\textup{L}\ref{main-lemma}}{=} & \EPS{\initSeg{\alpha}{n}}{\omega}{\varepsilon}(q_{\initSeg{\alpha}{n}})(0) \\[1mm]
		& \stackrel{(*)}{=} & \varepsilon_{\initSeg{\alpha}{n}}(\lambda x . \selEmb{\EPS{\initSeg{\alpha}{n} * x}{\omega}{\varepsilon}}(q_{\initSeg{\alpha}{n} * x})) \\[2mm]
		& = & \varepsilon_{\initSeg{\alpha}{n}}(p_{\initSeg{\alpha}{n}}).
\end{array}
}
For the second identity, we have
\eqleft{
\begin{array}{lcl}
	q \alpha & \stackrel{\textup{L}\ref{main-lemma}}{=} & q_{\initSeg{\alpha}{n+1}}(\EPS{\initSeg{\alpha_{\emptyseq}}{n+1}}{\omega}{\varepsilon}(q_{\initSeg{\alpha}{n+1}})) \\[2mm]
	& = & p_{\initSeg{\alpha}{n}}(\alpha(n)) \\[2mm]
	& = & \selEmb{\varepsilon_{\initSeg{\alpha}{n}}}(p_{\initSeg{\alpha}{n}}),
\end{array}
}
where the last equality uses that $\alpha(n) = \varepsilon_{\initSeg{\alpha}{n}}(p_{\initSeg{\alpha}{n}})$ is already shown.
\end{proof}

The significance of Theorem \ref{eps-main} is that it shows how the product of selection functions computes a sequence $\alpha$ that represents, in some sense, a sequential equilibrium between the selection functions up to the point $\omega\alpha$. This kind of equilibrium appears in a variety of different contexts, most notably the following. 

%%%%%%%%%%%%%%%%%%%%%%%%%%%%%%%%%%%%%%%%%%%%%%%%
\subsection{Optimal strategies in sequential games}
%%%%%%%%%%%%%%%%%%%%%%%%%%%%%%%%%%%%%%%%%%%%%%%%
\label{subsec-games}

As discussed in \cite{EO(2011A)}, the parameters $\varepsilon$, $q$ and $\omega$ of $\EPSs$ naturally define a sequential game
\eqleft{\game^{X,R} = (\varepsilon,q,\omega)}
of type $(X,R)$. We imagine $X$ as a set of possible moves at each round, and $R$ as a set of possible outcomes. A finite sequence $s\colon X^\ast$ can be thought of as a position in the game determined by the first $|s|$ moves, while an infinite sequence $\alpha\colon X^\NN$ can be thought of as a play of the game. We then make the following associations:

\begin{itemize}

\item $\varepsilon_s\colon J_RX$ determines an optimal move at position $s$ given that the outcome of each possible move $X\to R$ is known.

\item $q\colon X^\NN\to R$ determines the outcome of each play $\alpha$. 

\item $\omega\colon X^\NN\to \NN$ determines the relevant part of a play. A position $s$ is relevant if $\omega\hat s\geq |s|$.

\end{itemize}

We refer to $q$ and $\omega$ as the outcome function and control function, respectively. In general these games can be thought of as \textit{unbounded} games in which we only care about a finite initial segment of any play, as determined by $\omega$. In the context of game theory Theorem \ref{eps-main} can be rephrased as the following.

\begin{theorem}\label{eps-games} The sequence $\alpha=\EPS{\pair{}}{\omega}{\varepsilon}(q)$ is an \textit{optimal play} in the game $\game^{X, R} = (\varepsilon, q, \omega)$.\end{theorem}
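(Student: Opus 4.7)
The plan is to show that Theorem \ref{eps-games} is essentially a direct reinterpretation of Theorem \ref{eps-main} once one unpacks the notion of ``optimal play'' in the unbounded sequential game $\game^{X,R}$. First, I would make precise what optimality means here. Following the game-theoretic reading of the parameters: at a position $s$ with $|s| \leq \omega(\ext{s})$, an optimal move is one chosen by the selection function $\varepsilon_s$ applied to the ``continuation outcome'' map $x \mapsto$ (outcome obtained by playing $x$ and then continuing optimally from $s \ast x$). Recursively, the optimal continuation outcome from a position $t$ is precisely $\selEmb{\EPS{t}{\omega}{\varepsilon}}(q_t) = q_t(\EPS{t}{\omega}{\varepsilon}(q_t))$. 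Thus $\alpha \colon X^\NN$ counts as an optimal play of $\game^{X,R}$ exactly when, for every relevant round $n \leq \omega\alpha$,
\[ \alpha(n) = \varepsilon_{\initSeg{\alpha}{n}}(p_{\initSeg{\alpha}{n}}), \qquad \text{where} \quad p_s(x) = \selEmb{\EPS{s \ast x}{\omega}{\varepsilon}}(q_{s \ast x}). \]

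Second, I would observe that this is precisely the content of the first identity of Theorem \ref{eps-main}, applied to $\alpha = \EPS{\emptyseq}{\omega}{\varepsilon}(q)$. So the optimality condition, expressed move-by-move, holds by construction.

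Third, for a complete game-theoretic picture I would also invoke the second identity of Theorem \ref{eps-main}, namely $q\alpha = \selEmb{\varepsilon_{\initSeg{\alpha}{n}}}(p_{\initSeg{\alpha}{n}})$, which witnesses that the actual outcome of the play $\alpha$ coincides with the value the selection function assigns at each relevant position. In other words, no relevant prefix of $\alpha$ is ``retrospectively regretted'' by the corresponding $\varepsilon_s$, confirming the equilibrium reading.

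The only real obstacle is conceptual rather than technical: one has to be careful in phrasing ``optimal play'' for an unbounded game whose relevant horizon is itself determined by $\omega$ and by the play being computed. The control $\omega$ ensures that optimality is only demanded on the initial segment $\initSeg{\alpha}{\omega\alpha}$, which is exactly the range where Theorem \ref{eps-main} provides the required equations; beyond that range the game is irrelevant by design. Once the definition of optimality is phrased in this horizon-sensitive way, the theorem follows immediately by appeal to Theorem \ref{eps-main}, with no additional computation needed.
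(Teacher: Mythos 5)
Your proposal is correct and matches the paper's own treatment: the paper does not give a detailed proof either, but sketches exactly this argument, namely that optimality of $\alpha$ means $\alpha(n)=\varepsilon_{\initSeg{\alpha}{n}}(p_{\initSeg{\alpha}{n}})$ for all relevant $n\leq\omega\alpha$ (with the outcome equation $q\alpha=\selEmb{\varepsilon_{\initSeg{\alpha}{n}}}(p_{\initSeg{\alpha}{n}})$ confirming the equilibrium reading), so that Theorem \ref{eps-games} is a direct rephrasing of Theorem \ref{eps-main}, with the full formal details of ``optimal play'' deferred to \cite{EO(2010A)}. Your horizon-sensitive phrasing of optimality is precisely the reading the paper intends.
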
 

We do not go into details on exactly what constitutes an optimal play, or how Theorem \ref{eps-games} is proved (for this see \cite{EO(2010A)}) but the main idea is not difficult to see. We imagine the function $p_s$ defined in Theorem \ref{eps-main} as giving outcome of playing $x$ at position $s$, under the assumption that all subsequent moves are played optimally, and thus $\varepsilon_s(p_s)$ is the best move at position $s$. 

The product of selection functions carries out a backtracking algorithm and eventually computes a sequence $\alpha$ such that $\alpha(n)=\varepsilon_{\initSeg{\alpha}{n}}(p_{\initSeg{\alpha}{n}})$ for all $n\leq\omega\alpha$. In other words $\alpha(0)$ is the best move at position $\pair{}$, $\alpha(1)$ the best move at position $\pair{\alpha(0)}$ and more generally $\alpha(n)$ the best move at position $\initSeg{\alpha}{n}$ for as long as $\initSeg{\alpha}{n}$ is relevant. In this sense $\alpha$ forms an optimal play of $\game$. We remark that the strategy profile arising from this notion of optimal play coincides with the \textit{Nash equilibrium} of a sequential game (see \cite{EO(2012A)}).

%%%%%%%%%%%%%%%%%%%%%%%%%%%%%%%%%%%%%%%%%%%%%%%%
\subsection{The dialectica interpretation of the axiom of choice}
%%%%%%%%%%%%%%%%%%%%%%%%%%%%%%%%%%%%%%%%%%%%%%%%
\label{subsec-choice}

Sequential games provide us with perhaps the most illuminating instance of the equilibrium computed by the product of selection functions. Remarkably, another instance is the dialectica interpretation of the axiom of choice.

The functional interpretation of $\pCA$ is equivalent to
\begin{equation*}\forall\varepsilon,q,\omega(\forall n,p A_n(\varepsilon_np,p(\varepsilon_np))\to\exists\alpha\forall n\leq\omega\alpha A_n(\alpha n,q\alpha)).\end{equation*}
It challenges us, given a collection of `pointwise' strategies $\varepsilon_n$ that witness the no-counterexample interpretation of the $A_n$, to combine them into a global strategy $\alpha$ that witnesses the n.c.i. of $\forall n A_n$. It is clear by Theorem \ref{eps-main} that the product of selection functions does the job.
\begin{theorem}\label{eps-choice}The functional $\lambda\varepsilon,q,\omega.\EPS{\pair{}}{\omega}{\varepsilon}(q)$ realises the dialectica interpretation of $\pCA$.\end{theorem}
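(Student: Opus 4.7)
The strategy is to apply Theorem \ref{eps-main} essentially verbatim and then simply specialise the hypothesis of the choice axiom at a cleverly chosen point function. Given data $\varepsilon, q, \omega$ satisfying the premise $\forall n, p\, A_n(\varepsilon_n p, p(\varepsilon_n p))$, I would first reconcile the typing: the family $\varepsilon_n \colon J_R X$ is indexed by natural numbers, whereas the iterated product expects a family indexed by finite sequences, so I set $\varepsilon'_s := \varepsilon_{|s|}$ and define
\[ \alpha \;:=\; \EPS{\emptyseq}{\omega}{\varepsilon'}(q). \]
This is the realiser claimed by the theorem (modulo the harmless forgetting of the prefix of $s$).

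Next I would invoke Theorem \ref{eps-main} on $\alpha$. For every $n \leq \omega\alpha$ it supplies the two identities
\[ \alpha(n) \;=\; \varepsilon'_{\initSeg{\alpha}{n}}\bigl(p_{\initSeg{\alpha}{n}}\bigr) \;=\; \varepsilon_n\bigl(p_{\initSeg{\alpha}{n}}\bigr), \qquad q\alpha \;=\; \selEmb{\varepsilon_n}\bigl(p_{\initSeg{\alpha}{n}}\bigr) \;=\; p_{\initSeg{\alpha}{n}}\bigl(\alpha(n)\bigr), \]
where $p_s(x) = \selEmb{\EPS{s * x}{\omega}{\varepsilon'}}(q_{s * x})$ as in the statement of that theorem. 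The second equality uses the definition $\selEmb{\varepsilon_n}(p) = p(\varepsilon_n p)$ together with the first equality.

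Now I would instantiate the hypothesis at the natural number $n$ and the function $p := p_{\initSeg{\alpha}{n}}$. This yields
\[ A_n\!\Bigl(\varepsilon_n\bigl(p_{\initSeg{\alpha}{n}}\bigr),\; p_{\initSeg{\alpha}{n}}\bigl(\varepsilon_n\bigl(p_{\initSeg{\alpha}{n}}\bigr)\bigr)\Bigr), \]
and by the two identities above this is precisely $A_n(\alpha n, q\alpha)$. Since $n \leq \omega\alpha$ was arbitrary, the conclusion of the dialectica-interpreted $\pCA$ holds with the witness $\alpha$, as required.

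I do not anticipate a genuine obstacle here: all the real work has been done in Lemma \ref{main-lemma} and Theorem \ref{eps-main}. The only thing to be careful about is the matching of types between $\varepsilon_n$ and $\varepsilon'_s$, and the routine verification that the ``backward-induction price'' $p_{\initSeg{\alpha}{n}}$ is exactly the right $p$ to plug into the premise in order to convert the pointwise no-counterexample data into the global one.
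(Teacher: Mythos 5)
Your proposal is correct and follows exactly the route the paper indicates (the paper itself only sketches this, remarking that Theorem \ref{eps-main} ``does the job'' and deferring details to the cited references): one re-indexes $\varepsilon_n$ as $\varepsilon_{|s|}$, applies Theorem \ref{eps-main} to $\alpha = \EPS{\pair{}}{\omega}{\varepsilon}(q)$, and instantiates the premise at $p_{\initSeg{\alpha}{n}}$ to obtain $A_n(\alpha n, q\alpha)$ for all $n \leq \omega\alpha$. No gaps; your typing remark and the identification of $p_{\initSeg{\alpha}{n}}$ as the right instance are precisely the points that need care.
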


Again, we do not go into detail, this time we refer the reader to \cite{EO(2010A),OP(2012)}. It can be shown more generally that $\EPSs$ directly witnesses the dialectica interpretation of dependent choice for arbitrary formulas, and that a finite form of $\EPSs$ with $\omega$ constant directly interprets finite choice or \emph{bounded collection}. Moreover, $\EPSs$ is primitive recursively equivalent to Spector's bar recursion \cite{EO(2009)}, and its finite form is equivalent to primitive recursion over a weak base theory \cite{EOP(2011)}.

The key point we emphasise is that, as a computational analogue of choice, the product of selection functions is an extremely useful recursion schema to have at our disposal when it comes to extracting programs from proofs in both arithmetic and analysis. The fact that it also computes optimal strategies in sequential games means that extracted programs can be given an intuitive game-theoretic semantics, in the sense that we can often informally identify the ``classical" dialectica interpretation $A^{ND}$ of a theorem $A$ with a partially defined sequential game:
\[ A^{ND} \quad \sim \quad \game_A,
\]
where a realizer for $A^{ND}$ can given in terms of optimal strategies in $\game_A$. This gives the product of selection functions a clear advantage over bar recursion when interpreting theorems in analysis.

We now extract a program from the formal proof of Ramsey's theorem described in Section \ref{sec-formal} using the product of selection functions $\EPSs$. We apply the product directly, appealing only to the main Theorem \ref{eps-main}. The other results in this section were mentioned simply to provide some motivation as to why the product appears naturally in proof theory and why it is preferred to the more traditional modes of recursion.

%%%%%%%%%%%%%%%%%%%%%%%%%%%%%%%%%%%%%%%%%%%%%%%%
%%%%%%%%%%%%%%%%%%%%%%%%%%%%%%%%%%%%%%%%%%%%%%%%
\section{A Constructive Proof of Ramsey's Theorem}
%%%%%%%%%%%%%%%%%%%%%%%%%%%%%%%%%%%%%%%%%%%%%%%%
%%%%%%%%%%%%%%%%%%%%%%%%%%%%%%%%%%%%%%%%%%%%%%%%
\label{sec-extract}

Before launching into the full interpretation of the classical proof, it is instructive to look at the overall structure of our extracted program. Let us first look at the computational interpretation of the final part of the classical proof -- Theorem \ref{thm-ramsey}. Here we show that $\RAMPAIR{c}$ follows directly from $\IPP$ given that we have already constructed a min-monochromatic set. Suppose we have interpreted the lemma $\IPP$, in other words: for any $\varepsilon\colon \BB\times\NN^\NN\to \NN$ and $c$ we can construct $x$ and $p$ satisfying
\begin{equation}\label{ndipp}\forall n\leq\varepsilon_xp(pn\geq n\wedge c(pn)=x).\end{equation} 
Assuming that we have already (ineffectively) produced the min-monochromatic set given by $a$, if $c^a$ is defined as in Theorem \ref{thm-ramsey} and we set $\varepsilon^a_xp=\eta_x(a\circ p)$ (where we recall that $\eta$ is a counterexample function for the finitary Ramsey's theorem as in (\ref{finramsey})), then by (\ref{ndipp}) there exist $x^a$ and $p^a$ satisfying
\begin{equation*}
\forall n \leq \eta_{x^a}(a\circ p^a)(p^a n \geq n \wedge c^a(p^a n) = x^a).
\end{equation*}
It is easy to see that our main theorem follows since setting $F=a\circ p^a$, for $k\leq\eta_xF$ we have
\[ Fk = a(p^a k)\geq p^a k \geq k \]
and, given $F i < F j$
\[ c(Fi,Fj)=c(a(p^a i),a(p^a j))=c^a(p^a i) = x^a. \] 

So what about $a$? The key observation is that we do not really need to have constructed the whole of $a$ for the above argument to work, only a finite approximation of $a$ is necessary. By inspection, provided that $a$ is min-monochromatic up to
\eqleft{\varphi a = \max_{i\leq\eta_{x^a}(a\circ p^a)} p^a(i)}
the claim above still holds. Therefore, if in addition we have interpreted the lemma $\ER{c}$, running it on the counterexample function $\varphi$ gives us a sufficiently large approximation of the min-monochromatic branch needed for an approximation of Ramsey's theorem on $\eta$. Denoting the quantifier-free matrix of the dialectica interpretation of $A \equiv \exists x \forall y A_D(x ; y)$ as $\nd{A}{x}{y}$, we illustrate this construction, very informally, with the inference
\begin{prooftree}
\AxiomC{$\lambda a \ . \ \nd{\IPP[a]}{p^a,x^a}{\varepsilon^a}$}
\AxiomC{$\nd{\ER{c}}{a}{\varphi}$}
\BinaryInfC{$\nd{\RAMPAIR{c}}{a\circ p^a,x^a}{\eta}$}
\end{prooftree}
making clear that the realiser for $\IPP$ is computed relative to the parameter $a$. In practise this means that we run our program for $\ER{c}$ once, calling on the interpretation of $\IPP[a]$ each time we wish to check that a candidate $a$ is suitable.

An entirely analogous procedure is involved, in turn, for interpreting $\ER{c}$ itself. $\ER{c}$ follows from $\WKL$ assuming the existence of a monotone Skolem function $\beta$ making the tree $T$ decidable. Therefore we need to calibrate exactly how much of $\beta$ is required in order to successfully run the computational interpretation of $\WKL$. As we will see, this part is rather more involved. A rough map of our whole construction is given in Figure \ref{fig-interpreted}.

\begin{figure}[h]
\begin{center}
{\footnotesize
\begin{prooftree}
\AxiomC{L. \ref{lemma-ipp}}
\noLine
\UnaryInfC{$\lambda a \ . \ \nd{\IPP[a]}{p^a,x^a}{\varepsilon^a}$}
\AxiomC{Th. \ref{wkl-main}}
\noLine
\UnaryInfC{$\lambda \beta \ . \ \nd{\WKL[\beta]}{\alpha^\beta}{\omega_\beta}$}
\AxiomC{L. \ref{ca-approx}}
\noLine
\UnaryInfC{$\nd{\pCA}{\beta}{\tilde q,\tilde\omega}$}
\RightLabel{Th. \ref{swkl-main}, L. \ref{swkl-lemma2}}
\BinaryInfC{$\nd{\ER{c}}{a^{\alpha,\beta}}{\varphi}$}
\RightLabel{Th. \ref{ramsey-main}}
\BinaryInfC{$\nd{\RAMPAIR{c}}{a\circ p^a,x^a}{\eta}$}
\end{prooftree}
}
\end{center}
\caption{Interpreted proof of Ramsey's theorem}
\label{fig-interpreted}
\end{figure}

By comparison with our proof tree in Section \ref{sec-formal} it is clear -- as expected -- that the structure of the interpreted proof reflects that of the classical proof. 

As mentioned in Remark \ref{remark-ncolours}, generalising our construction to the $n$-colour case becomes non-trivial in the construction of the min-monochromatic branch, as the E/R tree is no longer binary branching for $n>2$ and therefore calibrating how much of $\beta$ we require is a little more intricate. Also, in the $n$-colour case full use of $\IPP$ would be made. That is explained in Lemma \ref{lemma-ipp} below.

We now proceed with our formal program extraction. We interpret each of the main ineffective lemmas $\IPP$, $\WKL$ and $\pCA$ in turn using the product of selection functions, and combine these realisers as described above in order to produce an approximation of Ramsey's theorem. In Section \ref{subsec-gameint} we discuss the aforementioned link with sequential games, and give our program a game-theoretic reading.

%%%%%%%%%%%%%%%%%%%%%%%%%%%%%%%%%%%%%%%%%%%%%%%%
\subsection{Interpreting $\WKL$}
%%%%%%%%%%%%%%%%%%%%%%%%%%%%%%%%%%%%%%%%%%%%%%%%
\label{subsec-wkl}

The first ineffective step in the proof we examine is the use of weak K\"onig's lemma to produce the infinite sequence $\alpha$ given a Skolem function $\beta$, as in Lemma \ref{wkl-used}. We will show how to witness the no-counterexample interpretation of this lemma. As before, let $T$ be the $\Sigma^0_1$-predicate on $\BB^*$ defined as
\eqleft{T(s) := \exists k (\underbrace{\exists k' \!\in\! [|s|, k] \, \forall i < |s| (s_i = 0 \;\leftrightarrow\; i \prec k')}_{\mt(s, k)}).}
Let us assume we have an ideal Skolem function $\beta$ satisfying 
\begin{equation} \label{skolem-ideal}
\forall n, k \forall s (|s| = n \wedge \mt(s, k) \to \mt(s, \beta n)).
\end{equation}
Because the existence of $\beta$ is ineffective, we will keep track of exactly when we call on $\beta$ by highlighting it with a $\fbox{box}$. This means that we know how much of $\beta$ is needed to construct an approximation of $\alpha$, so that later we can in turn produce an approximation to $\beta$ sufficient for the construction of $\alpha$. 

Recall that we use the abbreviation $T^\beta(s) = \mt(s, \beta(|s|))$. The n.c.i. of Lemma \ref{wkl-used} is as follows 
\begin{equation} \label{wkl-inter-eq}
\forall\omega^{\BB^\NN\to \NN}\exists\alpha T^\beta(\initSeg{\alpha}{\omega\alpha}).
\end{equation}
Therefore, let us show how to witness $\alpha$ as a function of $\beta$ and $\omega$.

\begin{lemma} \label{wkl-premis} Let $\beta$ be a function satisfying (\ref{skolem-ideal}). The tree $T^\beta$ has branches of arbitrary length, i.e. for all $n$ there exists $s$ such that $|s| = n$ and
\[ \underbrace{\exists k' \!\in\! [n, \beta n] \, \forall i < |s| (s_i = 0 \;\leftrightarrow\; i \prec k')}_{T^\beta(s)}, \]
\end{lemma}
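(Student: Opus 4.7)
The plan is to derive the statement essentially immediately from the Skolem hypothesis (\ref{skolem-ideal}) combined with the fact, already noted in part (i) of the lemma on properties of $T$, that the $\Sigma_1^0$-tree $T$ itself has branches of every length. In other words, all the work has been packaged into the definition of $\beta$, and the present lemma is the formal statement that $\beta$ succeeds in turning $T$ into a decidable tree of the same height.

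Concretely, given $n$, I would exhibit the canonical depth-$n$ branch of $T$: define $s \in \BB^n$ by $s_i := 0$ iff $i \prec n$, for $i < n$. Taking $k' := n$, which lies in the degenerate interval $[n,n]$, the biconditional $s_i = 0 \leftrightarrow i \prec k'$ for $i < n$ is precisely the defining property of $s$. Hence $\mt(s, n)$ holds, so in particular $T(s)$ is witnessed by $k = n$. Now I would invoke (\ref{skolem-ideal}) with this $s$ and $k = n$: the hypothesis $|s| = n \wedge \mt(s, n)$ is met, and the conclusion $\mt(s, \beta n)$ unfolds to
\[ \exists k' \in [n, \beta n] \, \forall i < n\, (s_i = 0 \leftrightarrow i \prec k'), \]
which is exactly the $T^\beta(s)$ displayed in the lemma.

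There is no real obstacle here: the hypothesis on $\beta$ was engineered for precisely this purpose. The only thing to check is that the canonical characteristic sequence $s$ is always available, which is immediate from the definition of $\prec$ on the initial segment $[0,n)$. Note also that the argument implicitly relies on the monotonicity property $(\monb)$ of $\mt$ — used in the proof of Lemma \ref{skolem-lemma} to guarantee that (\ref{skolem-ideal}) can be realised by a single function $\beta$ — but not on any further property of $T$ or $\beta$.
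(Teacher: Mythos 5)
Your proof is correct and takes essentially the same route as the paper: exhibit the canonical length-$n$ branch $s_i = 0 \leftrightarrow i \prec n$, observe $\mt(s,n)$ (witness $k' = n$), and apply (\ref{skolem-ideal}) with $k = n$ to get $\mt(s,\beta n) \equiv T^\beta(s)$. The closing aside about $(\monb)$ is superfluous — the lemma simply assumes a $\beta$ satisfying (\ref{skolem-ideal}), so no monotonicity is used in this argument — but this does not affect correctness.
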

\begin{proof} Given $n$ define $s$ as the sequence of length $n$ such that, for $i < n$, $s_i = 0$ if and only if $i \prec n$. We then have $\mt(s, n)$. By (\ref{skolem-ideal}) with \fbox{$k = n$}, we can conclude $\mt(s, \beta n)$.
\end{proof}

\begin{lemma}\label{wkl-lemma-weak} Let $\INF_n(T) \equiv \exists s (|s| = n \wedge T(s))$. Let also $\beta$ be a function satisfying (\ref{skolem-ideal}), and $\varepsilon \colon J_{\BB} \BB$ be defined as
\eqleft{\varepsilon_s p \stackrel{\BB}{=}
\left\{
\begin{array}{ll}
	\True & {\rm if} \; \INF_{p(\True) + 1}(T^\beta_s) \to \INF_{p (\True)}(T^\beta_{s* \True}) \\[2mm]
	\False & {\rm otherwise}.
\end{array}
\right.
}
Then
\begin{equation}\label{sel-wkl} \forall s, p (\underbrace{\INF_{p(\varepsilon_s p) + 1}(T^\beta_s)}_{(i)} \to \INF_{p (\varepsilon_s p)}(T^\beta_{s* \varepsilon_s p})). \end{equation}
\end{lemma}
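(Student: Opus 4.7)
The plan is to proceed by case analysis on the value of $\varepsilon_s p$. When $\varepsilon_s p = \True$ the claim (\ref{sel-wkl}) is immediate: the implication $\INF_{p(\True)+1}(T^\beta_s) \to \INF_{p(\True)}(T^\beta_{s*\True})$ is exactly the condition selecting the $\True$ branch in the definition of $\varepsilon_s$, and substituting $\varepsilon_s p = \True$ into the conclusion reproduces it verbatim. The entire argument therefore reduces to the case $\varepsilon_s p = \False$, where by the ``otherwise'' clause I may assume (reading the implication classically) that $\INF_{p(\True)+1}(T^\beta_s)$ holds while $\INF_{p(\True)}(T^\beta_{s*\True})$ does not.

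Assuming additionally the premise $\INF_{p(\False)+1}(T^\beta_s)$, I would introduce $N = \max\{p(\True), p(\False)\}$ and observe that $\INF_{N+1}(T^\beta_s)$ holds: it follows from the standing assumption when $N = p(\False)$, and from the failing True-case when $N = p(\True)$. A witnessing sequence of length $N+1$ decomposes as $b * t'$ with $|t'| = N$, so $\INF_N(T^\beta_{s*b})$ for some $b \in \BB$. If $b = \True$, combining this with $N \geq p(\True)$ would yield $\INF_{p(\True)}(T^\beta_{s*\True})$, contradicting our hypothesis; hence $b = \False$, and $N \geq p(\False)$ then delivers the desired $\INF_{p(\False)}(T^\beta_{s*\False})$.

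The main obstacle is justifying the ``monotonicity'' $\INF_m(T^\beta_u) \to \INF_n(T^\beta_u)$ used in the last step for $m \geq n$. This would follow immediately from prefix-closure of $T^\beta$, but prefix-closure is not obvious from the definition $T^\beta(s) = \mt(s, \beta|s|)$ because the second argument of $\mt$ depends on $|s|$. The right route is to pass, via (\textup{M}1), from $\mt(s*t, \beta|s*t|)$ to $\mt(s, \beta|s*t|)$, and then invoke the Skolem property (\ref{skolem-ideal}) of $\beta$ (with $k = \beta|s*t|$) to collapse the second argument down to $\beta|s|$. With this closure in hand, truncating a length-$m$ witnessing extension to its length-$n$ prefix gives the required monotonicity, and the case analysis above goes through.
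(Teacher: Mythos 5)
Your proof is correct and uses the same essential ingredients as the paper's: the $\True$ case is immediate from the definition of $\varepsilon_s$, and in the $\False$ case the classically-read failure of the $\True$-clause, together with the premise, is combined with $(\mona)$ and the Skolem property (\ref{skolem-ideal}) to shift witness lengths and re-normalise the bound $\beta(\cdot)$. The organisation differs mildly: the paper splits the $\False$ case further into $p(\True)\geq p(\False)$ and $p(\True)<p(\False)$ and treats the two subcases by separate applications of $(\mona)$ and (\ref{skolem-ideal}), whereas you set $N=\max\{p(\True),p(\False)\}$ and route both through a single monotonicity lemma $\INF_m(T^\beta_u)\to\INF_n(T^\beta_u)$ for $m\geq n$ (prefix-closure of $T^\beta$ up to re-normalising the bound via (\ref{skolem-ideal})), which is arguably cleaner and generalises more readily. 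Your observation that the one-step decomposition of a length-$(N+1)$ witness needs no monotonicity at all, because the bounds $\beta(|s|+N+1)$ and $\beta(|s\ast b|+N)$ coincide, is exactly right, and it is the only place where such a shortcut is available. Finally, the instances of (\ref{skolem-ideal}) your argument actually calls on (sequences of length at most $|s|+\max\{p(\True),p(\False)\}+1$, with $k=\beta(|s|+N+1)$) fall within the bounds recorded in Remark \ref{sel-wkl-skolem}, so the quantitative bookkeeping that the paper extracts from its proof, and that Corollary \ref{skolem-bound} depends on, goes through unchanged with your version.
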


\begin{proof}Fix $s$ and $p$ and assume ($i$). If
\eqleft{\INF_{p(\True) + 1}(T^\beta_s) \to \INF_{p (\True)}(T^\beta_{s* \True})}
holds, then $\varepsilon_s p = \True$ and we are done. If, on the other hand, 
\eqleft{\underbrace{\INF_{p(\True) + 1}(T^\beta_s)}_{(ii)} \wedge \underbrace{\neg \INF_{p (\True)}(T^\beta_{s* \True})}_{(iii)}}
holds, then $\varepsilon_s p = \False$. Hence, the assumption ($i$) implies ($iv$) $\INF_{p(\False) + 1}(T^\beta_s)$. Now we consider two cases: \\[1mm]
\textbf{Case 1: $p(0)\geq p(1)$.} By $(ii)$ and $(iii)$ we have $\INF_{p (\True)}(T^\beta_{s\ast\False})$. Therefore by $(\mona)$ we have $\exists t^{\BB^{p(\False)}} T'(s\ast\False\ast t,\beta(|s|+p(\True)+1))$, and applying (\ref{skolem-ideal}) for \fbox{$n=|s|+p(\False)+1$ and $k=\beta(|s|+p(\True)+1)$} we obtain
\[ \exists t^{\BB^{p(\False)}} T'(s\ast\False\ast t,\beta(|s|+p(\False)+1)) \equiv \INF_{p(\False)}(T^\beta_{s\ast\False}). \]
\textbf{Case 2: $p(0)<p(1)$.} Applying (\ref{skolem-ideal}) on \fbox{$n=|s|+p(\True)+1$ and $k=\beta(|s|+p(\False)+1)$} and ($iii$) we obtain
\[ \forall t^{\BB^{p(\True)}}\neg T'(s\ast \True\ast t,\beta(|s|+p(\False)+1)). \]
By ($\mona$) we have $$\forall r^{\BB^{p(\False)}}\neg T'(s\ast \True\ast r,\beta(|s|+p(\False)+1))\equiv\neg\INF_{p(\False)}(T^\beta_{s\ast 0}).$$ But then by ($iv$) we obtain $\INF_{p(\False)}(T^\beta_{s\ast\False})$ and we are done. \end{proof}

\begin{remark} \label{sel-wkl-skolem} By inspecting the above proof we see that to verify that the selection functions $\varepsilon$ satisfy (\ref{sel-wkl}) for given $s$, $p$ it is sufficient that the Skolem function $\beta$ satisfies (\ref{skolem-ideal}) only up to $$n=|s|+\max\{p(\True),p(\False)\}+1 \ \mbox{and} \ k={\max}_{i\leq n} \beta(i).$$
\end{remark}

In order to construct a witness for (\ref{wkl-inter-eq}) we shall first build a sequence $\sbool$ satisfying
\begin{equation} \label{wkl-goal}
\forall k < \omega \sbool (\INF_{\omega \sbool - k}(T^\beta_{\initSeg{\sbool}{k}}) \to \INF_{\omega \sbool - k - 1}(T^\beta_{\initSeg{\sbool}{k+1}})).
\end{equation}
We will then obtain (\ref{wkl-inter-eq}) by a simple induction on $k$. 

\begin{theorem} \label{wkl-main} Let $\beta$ be a function satisfying (\ref{skolem-ideal}), and $\omega \colon \BB^\NN \to \NN$ be given. Define $q^\omega\sbool$ as $\omega\sbool-k-1$ where $k<\omega\sbool$ is the least refuting (\ref{wkl-goal}), and $0$ if no such $k$ exists. Also, let $\varepsilon$ be as defined in Lemma \ref{wkl-lemma-weak}. The sequence
\eqleft{\alpha = \EPS{\pair{\,}}{\omega}{\varepsilon}(q^\omega)}
satisfies $T^\beta(\initSeg{\sbool}{\omega \sbool})$.
\end{theorem}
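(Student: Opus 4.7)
The plan is to feed the infinite product $\alpha = \EPS{\pair{}}{\omega}{\varepsilon}(q^\omega)$ into the main Theorem \ref{eps-main} on $\EPSs$, then match the outcome function $q^\omega$ with the selection function $\varepsilon_s$ from Lemma \ref{wkl-lemma-weak}, and finally unwind by induction.

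First, I would invoke Theorem \ref{eps-main}: for every $n \leq \omega\alpha$ we have $\alpha(n) = \varepsilon_{\initSeg{\alpha}{n}}(p_n)$ and, crucially, $p_n(\alpha(n)) = q^\omega \alpha$, where $p_n := p_{\initSeg{\alpha}{n}}$ is the continuation map defined in that theorem. Applied to $s = \initSeg{\alpha}{n}$ and $p = p_n$, Lemma \ref{wkl-lemma-weak} (whose hypothesis on $\beta$ is exactly (\ref{skolem-ideal})) then yields, for every $n \leq \omega\alpha$,
\[ \INF_{q^\omega\alpha + 1}(T^\beta_{\initSeg{\alpha}{n}}) \;\to\; \INF_{q^\omega\alpha}(T^\beta_{\initSeg{\alpha}{n+1}}). \]

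Next I would show $\alpha$ satisfies the target property (\ref{wkl-goal}) by contradiction. Suppose there exists some $k < \omega\alpha$ for which (\ref{wkl-goal}) fails, and pick the least such $k$. By definition of $q^\omega$, this forces $q^\omega \alpha = \omega\alpha - k - 1$. Specialising the implication above to $n = k$ gives
\[ \INF_{\omega\alpha - k}(T^\beta_{\initSeg{\alpha}{k}}) \;\to\; \INF_{\omega\alpha - k - 1}(T^\beta_{\initSeg{\alpha}{k+1}}), \]
which directly contradicts the fact that $k$ refutes (\ref{wkl-goal}). Hence (\ref{wkl-goal}) holds.

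Finally, a straightforward induction on $k \leq \omega\alpha$ closes the theorem: the base case $\INF_{\omega\alpha}(T^\beta_{\pair{}})$ is exactly Lemma \ref{wkl-premis}, the induction step is (\ref{wkl-goal}), and at $k = \omega\alpha$ we land on $\INF_0(T^\beta_{\initSeg{\alpha}{\omega\alpha}})$, i.e.\ $T^\beta(\initSeg{\alpha}{\omega\alpha})$.

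The one step that requires genuine care is the alignment in the second paragraph: the outcome function $q^\omega$ has been cooked up precisely so that, evaluated on an $\alpha$ produced by $\EPSs$, its value coincides with the quantity $\omega\alpha - k - 1$ that Lemma \ref{wkl-lemma-weak} needs on the right-hand side of its implication. Recognising that Theorem \ref{eps-main} delivers exactly the identification $p_n(\alpha(n)) = q^\omega\alpha$ needed to substitute into Lemma \ref{wkl-lemma-weak} is the conceptual bridge; the rest is routine propagation and a two-line induction.
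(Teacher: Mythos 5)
Your proposal is correct and follows essentially the same route as the paper: apply Theorem \ref{eps-main} to identify $\alpha(n)=\varepsilon_{\initSeg{\alpha}{n}}(p_n)$ and $p_n(\alpha(n))=q^\omega\alpha$, plug these into Lemma \ref{wkl-lemma-weak}, use the definition of $q^\omega$ to rule out a least counterexample to (\ref{wkl-goal}), and finish by induction from Lemma \ref{wkl-premis}. The only cosmetic difference is that you instantiate the implication at every $n\leq\omega\alpha$ and then specialise to the offending $k$, whereas the paper instantiates it directly at the single position $\omega\alpha-q^\omega\alpha-1$; these coincide at the point of contradiction.
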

\begin{proof} By Lemma \ref{wkl-lemma-weak} we have
\begin{equation} \label{wkl-step1}
\forall s, p (\INF_{p(\varepsilon_s p) + 1}(T^\beta_s) \to \INF_{p (\varepsilon_s p)}(T^\beta_{s* \varepsilon_s p})).
\end{equation}
By Theorem \ref{eps-main} we have that $\alpha$ (as above) and $p_{\initSeg{\sbool}{n}}$ (as defined in Theorem \ref{eps-main}) are such that, for $n \leq \omega \sbool$,
\eqleft{
\begin{array}{lcl}
\sbool n & = & \varepsilon_{\initSeg{\sbool}{n}} p_{\initSeg{\sbool}{n}} \\[2mm]
q^\omega \sbool & = & p_{\initSeg{\sbool}{n}}(\varepsilon_{\initSeg{\sbool}{n}} p_{\initSeg{\sbool}{n}}).
\end{array}
}
Hence, taking \fbox{$s = \initSeg{\sbool}{\omega\sbool-q^\omega\sbool-1}$ and $p = p_s$} in (\ref{wkl-step1}), we obtain
\begin{equation} \label{wkl-step2}
\INF_{q^\omega \sbool +1}(T^\beta_{\initSeg{\sbool}{\omega\sbool-q^\omega\sbool-1}}) \to \INF_{q^\omega \sbool}(T^\beta_{\initSeg{\sbool}{\omega\sbool-q^\omega\sbool}}).
\end{equation}
Therefore, by the definition of $q^\omega$ we must have that (\ref{wkl-goal}) holds. If not, then there is some least $k < \omega \alpha$ refuting (\ref{wkl-goal}), but then (\ref{wkl-step2}) is equivalent to 
\begin{equation*}
\INF_{\omega\sbool-k}(T^\beta_{\initSeg{\sbool}{k}}) \to \INF_{\omega\sbool-k-1}(T^\beta_{\initSeg{\sbool}{k+1}}).
\end{equation*}
Now, by Lemma \ref{wkl-premis} we have $\INF_{\omega \sbool}(T^\beta)$ (i.e. by taking \fbox{$n=\omega\alpha$ and $k=\omega\alpha$} in (\ref{wkl-step1}). Hence, by induction on $k$, from $k = 0$ to $k = \omega \alpha - 1$, we obtain $\INF_{0}(T^\beta_{\initSeg{\sbool}{\omega \sbool}})$, i.e. $T^\beta(\initSeg{\sbool}{\omega \sbool})$.
\end{proof}

Theorem \ref{wkl-main} defines a construction $\beta,\omega\mapsto\alpha^{\beta,\omega}$ that takes a Skolem function $\beta$ satisfying (\ref{skolem-ideal}) and a counterexample function $\omega$ and produces an ``approximately infinite" branch $\alpha$ of $T^\beta$. But the proof above only requires the selection functions $\varepsilon$ to satisfy (\ref{wkl-step1}) for the specific $s$, $p$ outlined, which in turn (Remark \ref{sel-wkl-skolem}) only require $\beta$ to satisfy (\ref{skolem-ideal}) for a finite number of inputs. Thus we obtain:

\begin{corollary}\label{skolem-bound}Given $\beta$ and $\omega$, let $\alpha$ and $p_s$ be constructed as in Theorem \ref{wkl-main} and define
\eqleft{
\begin{array}{lcl}
N^{\beta,\omega} & = & \max\{\omega\alpha,|\omega\alpha-q^\omega\alpha-1|+\max\{p_s(\True),p_s(\False)\}+1\} \\[2mm]
K^{\beta,\omega} & = & \max\{\omega\alpha,{\max}_{i\leq N^{\beta,\omega}} \beta(i)\}.
\end{array}
}
If $\beta$ is an \textit{approximate} Skolem function up to \fbox{$n=N^{\beta,\omega}$ and $k=K^{\beta,\omega}$} then $\alpha$ (from Theorem \ref{wkl-main}) satisfies $T^\beta(\initSeg{\sbool}{\omega \sbool})$. \end{corollary}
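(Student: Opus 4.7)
The plan is to carefully revisit the proof of Theorem \ref{wkl-main}, catalogue every place where the ideal Skolem hypothesis (\ref{skolem-ideal}) is actually invoked (these are exactly the boxed instances flagged in Section \ref{subsec-wkl}), and then check that all such invocations only require $\beta$ to satisfy (\ref{skolem-ideal}) at inputs bounded by $N^{\beta,\omega}$ and $K^{\beta,\omega}$. Since the selection functions $\varepsilon_s$ and the control $q^\omega$ depend on $\beta$ only through its values at finitely many points, replacing the ideal Skolem function by any approximation that agrees on those points leaves both the construction and its verification unchanged.

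First, I would enumerate the uses of $\beta$ in the proof. There are essentially two. One is the invocation of Lemma \ref{wkl-premis} to obtain $\INF_{\omega\alpha}(T^\beta)$; inspecting that lemma's proof shows that it only needs (\ref{skolem-ideal}) at $n = k = \omega\alpha$. The other is the application of (\ref{sel-wkl}) at $s = \initSeg{\alpha}{\omega\alpha - q^\omega\alpha - 1}$ and $p = p_s$ which gives (\ref{wkl-step2}); by Remark \ref{sel-wkl-skolem} this only requires (\ref{skolem-ideal}) at $n = |s| + \max\{p_s(\True), p_s(\False)\} + 1$ and $k = \max_{i \leq n} \beta(i)$. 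The definitions of $N^{\beta,\omega}$ and $K^{\beta,\omega}$ are precisely the maxima of these two sets of values, so under the stated hypothesis $\beta$ satisfies (\ref{skolem-ideal}) at every input that the proof of Theorem \ref{wkl-main} actually consults.

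Second, I would observe that the explicitly controlled recursion $\EPS{\pair{\,}}{\omega}{\varepsilon}(q^\omega)$ terminates at depth $\omega\alpha$, so all the selection functions $\varepsilon_{\initSeg{\alpha}{n}}$ triggered during its unwinding evaluate $\beta$ only on arguments already accounted for in $N^{\beta,\omega}$. Thus running the construction of Theorem \ref{wkl-main} with an approximate Skolem function produces the same $\alpha$, the same $p_s$ and the same $q^\omega\alpha$ as it would with an ideal $\beta$ agreeing on this finite range, and the argument deriving $T^\beta(\initSeg{\alpha}{\omega\alpha})$ carries over verbatim.

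The main subtlety — and in my view the only non-trivial point — is the self-referential flavour of the bounds: $N^{\beta,\omega}$ and $K^{\beta,\omega}$ are defined in terms of $\alpha$, $p_s$ and $q^\omega\alpha$, which themselves depend on $\beta$. The resolution is that the dependency is well-founded, because the product of selection functions builds $\alpha$ move by move and each selection function queries $\beta$ only at inputs below the current recursion depth plus a bounded lookahead determined by $p_s(\True)$ and $p_s(\False)$. One can make this precise by a short induction on the recursion depth, after which the statement of the corollary is immediate.
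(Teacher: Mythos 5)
Your core argument is correct and is essentially the paper's own: the Skolem property (\ref{skolem-ideal}) is invoked in the proof of Theorem \ref{wkl-main} only at the two boxed places, namely via Lemma \ref{wkl-premis} with $n=k=\omega\alpha$, and via (\ref{sel-wkl}) at $s=\initSeg{\alpha}{\omega\alpha-q^\omega\alpha-1}$, $p=p_s$, which by Remark \ref{sel-wkl-skolem} needs (\ref{skolem-ideal}) only up to $n=|s|+\max\{p_s(\True),p_s(\False)\}+1$ and $k=\max_{i\leq n}\beta(i)$; the bounds $N^{\beta,\omega}$ and $K^{\beta,\omega}$ are exactly the maxima of these, so the corollary follows. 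One remark on your third and fourth paragraphs: the ``self-referential'' worry is not actually an issue, and neither the comparison with an ideal $\beta$ agreeing on a finite range nor the induction on recursion depth is needed. The construction $\alpha=\EPS{\pair{\,}}{\omega}{\varepsilon}(q^\omega)$ and the functionals $p_s$, $q^\omega$ are total in $\beta$ regarded merely as a number-theoretic function --- the Skolem property of $\beta$ enters only in the \emph{verification} that $T^\beta(\initSeg{\alpha}{\omega\alpha})$ holds, never in the construction itself. So one simply runs the construction on the given (approximate) $\beta$, reads off $N^{\beta,\omega}$ and $K^{\beta,\omega}$ from its output, and checks the hypothesis post hoc; there is no circularity to resolve. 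In fact your claim that the recursion evaluates $\beta$ only at arguments below the current depth plus a bounded lookahead is both unnecessary and questionable (the backtracking evaluates $\INF$-predicates, and hence $\beta$, at many intermediate positions $s*x$), but since the corollary constrains only where the Skolem \emph{property} is used, not where $\beta$ is \emph{evaluated}, this does not affect the result.
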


%%%%%%%%%%%%%%%%%%%%%%%%%%%%%%%%%%%%%%%%%%%%%%%%
\subsection{Interpreting $\Pi^0_1$-countable choice}
%%%%%%%%%%%%%%%%%%%%%%%%%%%%%%%%%%%%%%%%%%%%%%%%
\label{subsec-ac}

We have described a construction $\beta \mapsto \alpha$ which for each oracle for the Skolem function $\beta$ computes an approximation to the infinite binary branch $\alpha$. In Corollary \ref{skolem-bound} we argued that one only needs an approximation to $\beta$ in order for our construction to work. We now show how to compute such an approximation. We first need the following lemma: 

\begin{lemma}\label{swkl-sel} \label{wkl-lemma} Let $\delta_n \colon J_{\NN} \NN$ be defined as
\begin{equation} \label{ndswklprem2wit}
\delta_n p = p^i(0)
\end{equation}
where $i$ is the least $\leq 2^n$ such that, for all $s^{\BB^n}$, $\mt(s, p^{i+1}(0)) \to \mt(s, p^i(0))$. We have
\begin{equation} \label{ndswklprem2}
\forall\lth{s}{n}(\mt(s,p(\delta_n p)) \rightarrow \mt(s,\delta_n p))
\end{equation}
for arbitrary $n$, $p$.
\end{lemma}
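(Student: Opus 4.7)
My plan is to first verify that the index $i$ in the definition of $\delta_n p$ actually exists; once this is shown, the conclusion $\forall \lth{s}{n}(\mt(s, p(\delta_n p)) \to \mt(s, \delta_n p))$ is immediate, since by unfolding the definition $p(\delta_n p) = p^{i+1}(0)$ and $\delta_n p = p^i(0)$, so that the claim reduces exactly to the defining property of $i$.

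To show existence of such an $i \leq 2^n$, I would apply a pigeonhole argument to the finite subsets
\[
B_i := \{\, s \in \BB^n : \mt(s, p^i(0)) \,\}
\]
of $\BB^n$. The key observation is that by the monotonicity condition $(\monb)$, whenever $p^i(0) \leq p^j(0)$ we have $B_i \subseteq B_j$. Since any two natural numbers are comparable, the sequence $B_0, B_1, B_2, \ldots$ is totally ordered by inclusion; in particular, for each $i$ either $B_{i+1} \subseteq B_i$ (which is exactly the property we want) or $B_i \subsetneq B_{i+1}$.

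Now suppose for a contradiction that no $i \in \{0, 1, \ldots, 2^n\}$ satisfies $B_{i+1} \subseteq B_i$. Then $B_0 \subsetneq B_1 \subsetneq \cdots \subsetneq B_{2^n + 1}$ is a strictly increasing chain of $2^n + 2$ subsets of the $2^n$-element set $\BB^n$, forcing their cardinalities to be $2^n + 2$ distinct integers in $\{0, 1, \ldots, 2^n\}$. This is impossible, so some $i \leq 2^n$ has the required property, which makes the definition of $\delta_n p$ well-formed.

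The main step is the pigeonhole argument itself, which becomes transparent once we note that $(\monb)$ forces the $B_i$ to form a chain; without this observation it would not be obvious why the bound $2^n$ on $i$ should suffice. Beyond $(\monb)$ and the finiteness of $\BB^n$, no further properties of the colouring $c$ or of $p$ enter the proof.
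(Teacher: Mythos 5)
Your proof is correct and takes essentially the same route as the paper: both reduce the lemma to the existence of the index $i \leq 2^n$ (after which (\ref{ndswklprem2}) holds by definition, since $p(\delta_n p)=p^{i+1}(0)$), and both establish that existence by contradiction via a pigeonhole argument on the $2^n$-element set $\BB^n$ using the monotonicity $(\monb)$. The only difference is presentational: the paper chooses for each of the $2^n+1$ indices a witness string $s$ and finds two indices forced to share a witness, while you count cardinalities along the strictly increasing chain $B_0 \subsetneq \cdots \subsetneq B_{2^n+1}$ --- the same counting in a different packaging.
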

\begin{proof} Note that (\ref{ndswklprem2}) holds by definition once we show that such $i \leq 2^n$ must exist. Assume, for the sake of a contradiction, that
\begin{itemize}
	\item[(I)] for all $i \leq 2^n$ there exists an $s^{\BB^n}$ such that $\mt(s, p^{i+1}(0))$ and $\neg \mt(s, p^i(0))$.
\end{itemize}
By monotonicity of $\mt$ on the second argument, (I) clearly implies that
\begin{itemize}
	\item[(II)] $p^i(0) < p(p^i(0))$, for all $0 \leq i \leq 2^n$.
\end{itemize}
Since, in (I), we have $2^n +1$ possible values for $i$ but only $2^n$ possible values for $s$, there must be an $s$ and distinct $i$ and $j$, say $i < i+1 \leq j$, such that $\mt(s, p^{i+1}(0))$ and $\neg \mt(s, p^j(0))$. By (II), however, that is a contradiction.
\end{proof}

We now show how to construct an arbitrary approximation to the Skolem function $\beta$. The next result can be viewed as the computational analogue of Lemma \ref{skolem-lemma}.

\begin{lemma}\label{ca-approx}Given arbitrary counterexample functionals $\tilde\omega,\tilde q\colon \NN^\NN\to\NN$, define
\eqleft{\beta=\EPS{\pair{}}{\tilde\omega}{\delta}(\tilde q)}
where $\delta$ is defined as in Lemma \ref{swkl-sel}. Then $\beta$ satisfies
\begin{equation} \label{skolem-approx}
\forall n\leq\tilde\omega\beta \;\forall s^{\BB^n}(\exists k\leq\tilde q\beta \; \mt(s,k)\to \mt(s,\beta n)).
\end{equation}
\end{lemma}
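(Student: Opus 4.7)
The plan is to derive Lemma \ref{ca-approx} as an almost immediate corollary of Theorem \ref{eps-main}, applied with the selection functions $\delta$ supplied by Lemma \ref{swkl-sel}, followed by a single use of the monotonicity property $(\monb)$ of $\mt$.

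First, I would instantiate Theorem \ref{eps-main} with $\varepsilon := \delta$ (reading $\delta_s$ as $\delta_{|s|}$, so that the selection function depends only on the length of $s$), outcome function $q := \tilde q$, and control $\omega := \tilde\omega$. Writing $\beta := \EPS{\pair{}}{\tilde\omega}{\delta}(\tilde q)$ and $p_s(x) := \selEmb{\EPS{s * x}{\tilde\omega}{\delta}}(\tilde q_{s * x})$ as in that theorem, we obtain for every $n \leq \tilde\omega\beta$ the two identities
\[
\beta(n) = \delta_n(p_{\initSeg{\beta}{n}}) \qquad \text{and} \qquad \tilde q\beta = p_{\initSeg{\beta}{n}}(\beta(n)).
\]

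Next, I would invoke the defining property of $\delta$ from Lemma \ref{swkl-sel}, namely
\[
\forall s^{\BB^n}(\mt(s, p(\delta_n p)) \to \mt(s, \delta_n p)),
\]
at the specific functional $p := p_{\initSeg{\beta}{n}}$. The identities above rewrite both sides: the left argument $p(\delta_n p)$ becomes $\tilde q\beta$ and the right argument $\delta_n p$ becomes $\beta(n)$, yielding
\[
\forall n \leq \tilde\omega\beta \; \forall s^{\BB^n}(\mt(s, \tilde q\beta) \to \mt(s, \beta n)).
\]

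Finally, to obtain (\ref{skolem-approx}) as stated, with premise $\exists k \leq \tilde q\beta \; \mt(s,k)$ rather than the single instance $\mt(s, \tilde q\beta)$, I would apply the monotonicity property $(\monb)$ in the second argument of $\mt$: from $\mt(s,k)$ with $k \leq \tilde q\beta$ we conclude $\mt(s, \tilde q\beta)$, and therefore $\mt(s, \beta n)$. There is no genuine obstacle here; all the real work has been done in Lemma \ref{swkl-sel} (where the bound $2^n$ on the iteration depth together with a pigeonhole argument produces a selection function for $\mt$) and in Theorem \ref{eps-main}. The only subtle point is matching notation between the pointwise $\delta_n$ of Lemma \ref{swkl-sel} and the sequence-indexed family required by $\EPSs$, which is harmless because $\delta_n$ depends only on $n$.
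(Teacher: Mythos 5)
Your proposal is correct and follows the paper's own argument: apply the main theorem on $\EPSs$ together with the defining property (\ref{ndswklprem2}) of $\delta$ to get $\forall n\leq\tilde\omega\beta\,\forall s^{\BB^n}(\mt(s,\tilde q\beta)\to \mt(s,\beta n))$, then use monotonicity $(\monb)$ to strengthen the premise to $\exists k\leq\tilde q\beta\,\mt(s,k)$. The only difference is that you spell out the two identities from Theorem \ref{eps-main} explicitly, which the paper leaves implicit.
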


\begin{proof}By the main theorem on $\EPSs$ and (\ref{ndswklprem2}) we obtain $$\forall n\leq\tilde\omega\beta \; \forall s^{\BB^n}(\mt(s,\tilde q\beta)\to \mt(s,\beta n)).$$ By ($\monb$) we have $\forall k\leq\tilde q\beta(\mt(s,k)\to \mt(s,\tilde q\beta))$, therefore (\ref{skolem-approx}) follows.\end{proof}

Now that we can construct approximations to $\beta$ we are able to construct an approximation to an infinite branch of the $\Sigma^0_1$ tree $T$.

\begin{theorem} \label{swkl-main} For all $\omega \colon \BB^\NN \times \NN^\NN \to \NN$ there exists $\alpha$ and $\beta$ such that
\begin{equation} \label{swklram2}
\forall n \!\leq\! \omega \alpha \beta \underbrace{\exists k \!\in\! [n, \beta n] \, \forall i < n (\alpha(i) = 0 \;\leftrightarrow\; i \prec k)}_{T^\beta(\initSeg{\alpha}{n})}.
\end{equation}
\end{theorem}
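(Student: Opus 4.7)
The plan is to compose the realisers for $\WKL$ (Theorem \ref{wkl-main}) and for $\pCA$ (Lemma \ref{ca-approx}) in the manner sketched in Figure \ref{fig-interpreted}. Given $\omega\colon \BB^\NN \times \NN^\NN\to\NN$, regard it as parametrised by $\beta$ by setting $\omega_\beta(\alpha) := \omega(\alpha,\beta)$. Theorem \ref{wkl-main} then associates to each $\beta$ the sequence
\[ \alpha^\beta := \EPS{\pair{}}{\omega_\beta}{\varepsilon}(q^{\omega_\beta}) \]
with the property that, provided $\beta$ is an approximate Skolem function for $\mt$ up to the bounds $N^{\beta,\omega_\beta}$ and $K^{\beta,\omega_\beta}$ of Corollary \ref{skolem-bound}, one has $T^\beta(\initSeg{\alpha^\beta}{\omega_\beta \alpha^\beta})$.

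The apparent circularity --- $\alpha^\beta$ depends on $\beta$, yet the bounds $N^{\beta,\omega_\beta}$ and $K^{\beta,\omega_\beta}$ are themselves obtained from running the $\alpha^\beta$-construction --- is resolved by feeding these bounds into Lemma \ref{ca-approx} as counterexample functionals. Concretely, define
\[ \tilde\omega(\beta) := N^{\beta,\omega_\beta}, \qquad \tilde q(\beta) := K^{\beta,\omega_\beta}, \]
and set $\beta := \EPS{\pair{}}{\tilde\omega}{\delta}(\tilde q)$, where $\delta$ is the family of selection functions from Lemma \ref{swkl-sel}. By Lemma \ref{ca-approx} this $\beta$ automatically satisfies
\[ \forall n \leq \tilde\omega\beta \ \forall s^{\BB^n}\bigl(\exists k \leq \tilde q\beta \, \mt(s,k) \to \mt(s,\beta n)\bigr), \]
which is exactly the hypothesis required for Corollary \ref{skolem-bound} to guarantee that the corresponding $\alpha := \alpha^\beta$ satisfies $T^\beta(\initSeg{\alpha}{\omega\alpha\beta})$.

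To strengthen this single-length conclusion to the uniform statement $\forall n \leq \omega\alpha\beta \ T^\beta(\initSeg{\alpha}{n})$ demanded by (\ref{swklram2}), one argues by prefix closure: from $\mt(\initSeg{\alpha}{\omega\alpha\beta}, \beta(\omega\alpha\beta))$ and $(\mona)$ we obtain $\mt(\initSeg{\alpha}{n}, \beta(\omega\alpha\beta))$ for every $n \leq \omega\alpha\beta$, and a final appeal to the approximate Skolem property of $\beta$ delivers $\mt(\initSeg{\alpha}{n},\beta n)$, i.e.\ $T^\beta(\initSeg{\alpha}{n})$. The main technical obstacle is to enlarge the bounds $N^{\beta,\omega_\beta}$ and $K^{\beta,\omega_\beta}$ of Corollary \ref{skolem-bound} just enough that every invocation of the approximate Skolem property above --- including the one in this last prefix-closure step --- is covered by the range validated by Lemma \ref{ca-approx}; since the $\EPSs$ construction of $\beta$ tolerates arbitrary such counterexample functionals, there is no obstruction in principle, but the careful bookkeeping is where the real work lies.
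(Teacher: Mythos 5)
Your proposal takes essentially the same route as the paper's proof: the paper likewise sets $\tilde\omega\beta = N^{\beta,\omega}$ and $\tilde q\beta = K^{\beta,\omega}$, obtains $\beta = \EPS{\pair{}}{\tilde\omega}{\delta}(\tilde q)$ from Lemma \ref{ca-approx}, takes $\alpha = \alpha^{\beta,\omega_\beta}$ so that Corollary \ref{skolem-bound} gives $T^\beta(\initSeg{\alpha}{\omega\alpha\beta})$, and then extends to all $n \leq \omega\alpha\beta$ exactly by your prefix-closure argument via $(\mona)$ and a second appeal to (\ref{skolem-approx}). The bookkeeping you defer at the end is in fact already covered by the bounds as defined in Corollary \ref{skolem-bound}, since $n \leq \omega\alpha\beta \leq N^{\beta,\omega} = \tilde\omega\beta$ and $\beta(\omega\alpha\beta) \leq \max_{i\leq N^{\beta,\omega}}\beta(i) \leq K^{\beta,\omega} = \tilde q\beta$, so no enlargement of $N^{\beta,\omega}$ or $K^{\beta,\omega}$ is needed.
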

\begin{proof} Let $\beta,\omega\mapsto\alpha^{\beta,\omega}$ denote the construction defined in Theorem \ref{wkl-main}. Define
\begin{equation*}\begin{array}{ccc}\tilde\omega\beta=N^{\beta,\omega} & \mbox{and} & 
\tilde q\beta = K^{\beta,\omega} \end{array}\end{equation*}
where $N^{\beta,\omega}$ and $K^{\beta,\omega}$ are defined as in Corollary \ref{skolem-bound}. Define $\beta=\EPS{\pair{}}{\tilde\omega}{\delta}(\tilde q)$ and $\alpha=\alpha^{\beta,\omega_\beta}$. We claim that these satisfy (\ref{swklram2}). 
By (\ref{skolem-approx}) and Corollary \ref{skolem-bound} we have $T^\beta(\initSeg{\alpha}{\omega\alpha\beta})$. Now suppose that $n\leq\omega\alpha\beta$. Then by (\ref{skolem-approx})
\begin{equation*}\mt(\initSeg{\alpha}{n},\beta(\omega\alpha\beta))\to \mt(\initSeg{\alpha}{n},\beta n)\equiv T^\beta(\initSeg{\alpha}{n}),\end{equation*}
and since by ($\mona$) $T^\beta(\initSeg{\alpha}{\omega\alpha\beta})\to \mt(\initSeg{\alpha}{n},\beta(\omega\alpha\beta))$ we are done.\end{proof}

We are now in a position where we can construct an arbitrarily long min-monochromatic sequence $a$, even if the length of the sequence is determined only after we have built $a$, as given by $\psi a$, as long as we are allowed to use $\psi$ in the construction of $a$.

\begin{lemma} \label{swkl-lemma2} For any $\psi$ there exists a function $a \colon \NN \to \NN$ such that for all $n \leq \psi a$
\begin{equation} \label{swklram3}
(n \leq a n) \wedge \forall i, j, k \!<\! n (ak < ai \wedge ak < aj \to c(a k, a i) = c(a k, a j)).
 \end{equation}
\end{lemma}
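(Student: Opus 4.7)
The plan is to apply Theorem \ref{swkl-main} with a carefully chosen counterexample functional $\omega$ and then define $a$ from the resulting $\alpha$ and $\beta$ following the recipe of Lemma \ref{build-a}. For any $\alpha \colon \BB^\NN$ and $\beta \colon \NN^\NN$, let $a^{\alpha,\beta} \colon \NN \to \NN$ denote the function given by $a^{\alpha,\beta}(0) = 0$ and, for $n > 0$, $a^{\alpha,\beta}(n) :=$ the least $k \in [n, \beta(\beta n + 1)]$ with $\alpha(k) = 0$ (with some default value, say $0$, when no such $k$ exists). By Remark \ref{build-a-remark} this value is the correct one as soon as $T^\beta(\initSeg{\alpha}{m})$ holds for every $m \leq \max\{n,\, \beta n + 1,\, \beta(\beta n + 1) + 1\}$.

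Given $\psi$, define the counterexample functional $\omega \colon \BB^\NN \times \NN^\NN \to \NN$ by
\[ \omega\alpha\beta \;:=\; \max_{n \leq \psi a^{\alpha,\beta}} \max\{n,\, \beta n + 1,\, \beta(\beta n + 1) + 1\}, \]
so that $\omega\alpha\beta$ bounds exactly the initial segment of $\alpha$ queried in constructing $a^{\alpha,\beta}(n)$ for every $n \leq \psi a^{\alpha,\beta}$. Apply Theorem \ref{swkl-main} to this $\omega$ to obtain $\alpha$ and $\beta$ with $T^\beta(\initSeg{\alpha}{m})$ for all $m \leq \omega\alpha\beta$, and set $a := a^{\alpha,\beta}$.

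The verification of (\ref{swklram3}) for $n \leq \psi a$ then splits in two. Well-definedness of $a(n)$, guaranteed by Remark \ref{build-a-remark} together with the choice of $\omega$, immediately gives $a(n) \in [n, \beta(\beta n + 1)]$, and in particular $n \leq an$. For the min-monochromatic clause, fix $i, j, k < n$ with $ak < ai$ and $ak < aj$, and set $m := \max\{ai, aj\} + 1$. Since $i, j < n \leq \psi a$, the choice of $\omega$ ensures $m \leq \omega\alpha\beta$, so $T^\beta(\initSeg{\alpha}{m})$ supplies some $k^* \in [m, \beta m]$ with $\{l < m : \alpha(l) = 0\} = pd(k^*)$. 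As $\alpha(ak) = \alpha(ai) = \alpha(aj) = 0$, the three values $ak, ai, aj$ all lie in $pd(k^*)$ and hence form a chain under $\prec$; since $\prec$ only relates naturals in their natural $<$-order, $ak < ai$ and $ak < aj$ force $ak \prec ai$ and $ak \prec aj$, so min-monochromaticity of the Erd\H{o}s--Rado tree yields $c(ak, ai) = c(ak, aj)$.

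The main obstacle is the bookkeeping needed to confirm $m \leq \omega\alpha\beta$ in the verification step — this is a routine composition of the bounds from Remark \ref{build-a-remark} with the max in the definition of $\omega$, using that $i, j \leq \psi a - 1$. Conceptually the lemma is a calibration step: all the real computational work has already been packaged into Theorem \ref{swkl-main}, and the present argument merely threads $\psi$ correctly through the dependence of $a^{\alpha,\beta}$ on $\alpha$ and $\beta$.
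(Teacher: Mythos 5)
Your proposal is correct and follows essentially the same route as the paper: define the parametrised $a^{\alpha,\beta}$ as in Lemma \ref{build-a}, choose $\omega\alpha\beta = \max_{i\leq\psi(a^{\alpha,\beta})}\max\{i,\beta i+1,\beta(\beta i+1)+1\}$ guided by Remark \ref{build-a-remark}, feed it to Theorem \ref{swkl-main}, and verify (\ref{swklram3}) by rerunning the argument of Lemma \ref{build-a} and Corollary \ref{erbranch} on the resulting approximate branch. The paper leaves that final verification as ``following the same proof as in Lemma \ref{build-a}''; you spell it out, correctly.
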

\begin{proof} First, define a parametrised $a^{\sbool, \swit}$ as in Lemma \ref{build-a}:
\begin{equation} \label{adefn}
\begin{array}{rl}
a^{\sbool, \swit} 0 &:= 0 \\[2mm]
a^{\sbool, \swit}(n+1) &:= \mu k \in [n, \beta(\beta n + 1)] \left( \sbool k = 0 \right).
\end{array}
\end{equation}
Then, take (cf. Remark \ref{build-a-remark}) 
\eqleft{\omega \alpha \beta = \max_{i\leq\psi(a^{\alpha, \beta})}(\max\{i,\beta i+1,\beta(\beta i+1)+1\})}
and let $\sbool$ and $\swit$ be as the Theorem \ref{swkl-main}. It is easy, following the same proof as in Lemma \ref{build-a}, to check that $a = a^{\sbool, \swit}$ satisfies (\ref{swklram3}).
\end{proof}

\begin{remark}\label{remark-ncolours2}For the $n$-colour case, the construction of $a$ is more complicated (cf. Remark \ref{remark-ncolours}) and $\omega$ will need to demand a larger approximation to $\beta$.\end{remark}

%%%%%%%%%%%%%%%%%%%%%%%%%%%%%%%%%%%%%%%%%%%%%%%%
\subsection{Final arguments and $\IPP$}
%%%%%%%%%%%%%%%%%%%%%%%%%%%%%%%%%%%%%%%%%%%%%%%%
\label{subsec-ipp}

Finally, the last non-constructive step in the proof is the use of the infinite pigeon-hole principle. Note that we in fact only make use of a particular instance of $\IPP$, namely $n = 2$. Nevertheless, we refer to the general $\IPP$ so it is easier to see how our construction can be generalised for arbitrarily many colours. 

\begin{lemma} \label{lemma-ipp} We have
\[ \forall \varepsilon^{\BB\times\NN^\NN\to\NN} \exists x^\BB, p^{\NN^\NN} \forall i \leq \varepsilon_x p (p i \geq i \wedge c(p i) = x). \]
\end{lemma}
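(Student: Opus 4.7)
Plan. The statement to be proved is the no-counterexample interpretation of the two-colour infinite pigeon-hole principle. Since $\IPP$ is provable in $\PAomega$, its n.c.i.\ is witnessable by primitive recursive functionals and does not require the full machinery of $\EPSs$ invoked for $\WKL$ and $\pCA$ above; the two-colour case admits a particularly clean direct construction, which I expect to present in parallel to the treatment of $\WKL$ in Section~\ref{subsec-wkl} so that the overall extracted program has a uniform flavour.

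My plan is to construct, in parallel, two candidate enumerations. For each bound $B$, define $p_x^B(i)$ to be the $(i{+}1)$-th element of $\{j\leq B : c(j)=x\}$ listed in increasing order, padded with a default value (say $B+i+1$) if fewer than $i{+}1$ such positions exist. The pair $(x,p_x^B)$ witnesses the n.c.i.\ precisely when $[0,B]$ contains at least $\varepsilon_x p_x^B + 1$ positions of colour $x$, and in that case $p_x^B(i)\geq i$ holds automatically by monotonicity of the enumeration. The algorithm iterates $B$ (doubling at each step, starting from $B_0 = \varepsilon_0\mathrm{id}+\varepsilon_1\mathrm{id}+1$), checking both candidates after each doubling and halting as soon as one succeeds. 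Termination in the two-colour case follows from pigeon-hole on $[0,B]$: once $B+1>\varepsilon_0 p_0^B + \varepsilon_1 p_1^B + 1$, we must have $m_x(B) := |\{j\leq B : c(j)=x\}|\geq\varepsilon_x p_x^B + 1$ for at least one $x$, and that $x$ together with $p_x^B$ is a valid witness.

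The main obstacle is the self-referential dependency of $B$ on $\varepsilon_x p_x^B$, which in turn depends on $B$. I plan to resolve this by first extracting a primitive recursive majorant $\tilde{\varepsilon}$ for $\varepsilon$, applied to any function dominated by the identity shifted by a constant (observing that both $p_0^B$ and $p_1^B$ are so dominated when $B$ is not too much larger than the number of elements enumerated). A fixed doubling schedule then suffices: after at most $O(\log\tilde\varepsilon(\mathrm{id}))$ iterations, the pigeon-hole inequality above is met and one of the two candidates is returned. The verification is a routine induction on the number of doublings, using only the quantifier-free facts that $c$ is decidable and that $p_x^B$ is a correct enumeration of zeros (resp.\ ones) of $c$ in $[0,B]$ whenever $m_x(B)>i$.
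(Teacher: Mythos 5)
Your halting condition is sound: if $[0,B]$ contains at least $\varepsilon_x p_x^B+1$ elements of colour $x$, then $(x,p_x^B)$ does witness the claim, and the conditional statement that $B+1>\varepsilon_0 p_0^B+\varepsilon_1 p_1^B+1$ forces one of the two checks to succeed is also correct. The gap is in the argument that this situation is ever reached, and it is twofold. First, there is no ``primitive recursive majorant $\tilde\varepsilon$ for $\varepsilon$'' to extract: $\varepsilon$ is an arbitrary functional parameter of the lemma (in its use in Theorem \ref{ramsey-main} it is $\lambda p.\eta_x(a\circ p)$ for an arbitrary counterexample functional $\eta$), not a closed term of $\systemT$, so no majorant can be computed uniformly from $\varepsilon$ as an oracle, and in the full set-theoretic model a type-two $\varepsilon$ need not be majorizable at all. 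Second, even granting a majorant, the bound you claim does not follow: because of the padding, $p_x^B$ is majorized only by $\lambda i.\,B+i+1$, and your hedge ``when $B$ is not too much larger than the number of elements enumerated'' fails exactly when colour $x$ is sparse. Hence $\varepsilon_x p_x^B$ may grow with $B$ and the self-referential inequality need never be met along the doubling schedule. Concretely, if colour $0$ occurs only at position $0$ and $\varepsilon_0 p=p(0)+p(1)$, then $\varepsilon_0 p_0^B=B+2$ for every $B$, so your pigeonhole criterion never fires (the loop happens to halt via the colour-$1$ check, but proving that requires that $p_1^B$ stabilises on the finitely many points $\varepsilon_1$ inspects, i.e.\ a continuity argument plus the classical $\IPP$, not a bound of the form $O(\log\tilde\varepsilon(\mathrm{id}))$); and with a discontinuous $\varepsilon_1$ such as $\varepsilon_1 p=\mu n\,(p(n)>n)$ (and $0$ if none exists) applied to a colouring that is $1$ everywhere, your search never terminates at all, whereas the lemma still has a $\systemT$-definable witness.

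The missing idea is that the circular dependence between the two colours should be resolved by nesting (backtracking) rather than by an iterated search with an a priori bound: compute colour $1$'s answer as a function of a hypothetical answer for colour $0$, and then optimise colour $0$'s answer against that. This is exactly what the paper does with the binary product of selection functions: setting $\tilde\varepsilon_x p=\mu i\leq\varepsilon_x p\,\neg(p i\geq i\wedge c(p i)=x)$, taking $(a_0,a_1)=(\tilde\varepsilon_0\otimes\tilde\varepsilon_1)(\max)$, $N=\max\{a_0,a_1\}$, $x=c(N)$ and $p=p_x$, the equations $a_x=\tilde\varepsilon_x p_x$ and $p_x(a_x)=N$ yield the conclusion by a short contradiction, with a $\systemT$-definable realiser and a quantifier-free verification that uses no continuity or majorizability. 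If you wish to keep an enumeration-style witness, you would still need to compute the colour-$1$ data relative to each candidate colour-$0$ bound, at which point you have reconstructed the product construction.
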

\begin{proof} Given $\varepsilon_x$ define
\eqleft{\tilde \varepsilon_x p = \mu i \leq \varepsilon_x p \neg (p i \geq i \wedge c(p i) = x). }
Then let $(a_0, a_1) = (\tilde \varepsilon_0 \otimes \tilde \varepsilon_1)(\max)$ and $N = \max\{a_0, a_1\}$. By the main theorem on the product of selection functions we have $p_0$ and $p_1$ such that
\[ a_0 = \tilde \varepsilon_0 p_0 \quad \quad a_1 = \tilde \varepsilon_1 p_1 \quad \quad N = p_0 (a_0) = p_1(a_1). \]
Let $x = c(N)$ and $p = p_x$. Clearly, $p (\tilde \varepsilon_x p_x) = p a_x = p_x a_x = N \geq a_x$. Moreover, $c(p(\tilde \varepsilon_x p_x)) = c(p a_x) = c(N) = x$. Hence, by the definition of $\tilde \varepsilon_x$ we must have
\[ \forall i \leq \varepsilon_x p (p i \geq i \wedge c(p i) = x). \]
Note that essentially the same proof works for the $n$-colour case, where we have $n$ selection functions $\tilde\varepsilon_0,\ldots,\tilde\varepsilon_{n-1}$ accounting for each colour, and we take the finite product $(\tilde\varepsilon_0\otimes\ldots\otimes\tilde\varepsilon_{n-1})(\max)$.
\end{proof}

The theorem then follows by combining the construction of the min-monochromatic sequence with an application of $\IPP$.

\begin{theorem}\label{ramsey-main} Let a colouring $c \colon \NN^2 \to \BB$ be fixed. For any pair of selection functions $\eta_x \colon J_{\NN} \NN$ there exists $F \colon \NN \to \NN$ and $x \colon \BB$ (explicitly given in Section \ref{explicit}) such that
\[ \forall k \leq \eta_x F (k \leq F k \wedge \forall i, j \leq k (F i < F j \to c(Fi, Fj)=x)). \]
\end{theorem}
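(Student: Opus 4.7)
The plan is to combine Lemma~\ref{swkl-lemma2}, which produces arbitrarily long min-monochromatic approximations of an Erd\H{o}s/Rado branch $a$ relative to any counterexample functional $\psi$, with Lemma~\ref{lemma-ipp}, the functional interpretation of the infinite pigeonhole principle. The key observation, already previewed at the beginning of Section~\ref{sec-extract}, is that $\IPP$ will be applied \emph{relative to} a candidate sequence $a$ while the counterexample functional $\psi$ fed back into Lemma~\ref{swkl-lemma2} is itself defined in terms of the data returned by that application of $\IPP$.

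First I would introduce, for every candidate $a$, the auxiliary colouring $c^a(n) := c(a(n), a(a(n)+1))$ (as in Theorem~\ref{thm-ramsey}, where the inner $a(a(n)+1)$ rather than $a(n+1)$ guarantees the strict inequality $a(n) < a(a(n)+1)$) together with the family of selection functions $\varepsilon^a_x p := \eta_x(a \circ p)$. Applying Lemma~\ref{lemma-ipp} uniformly in $a$ returns a colour $x^a \colon \BB$ and a function $p^a \colon \NN \to \NN$ satisfying
\[ \forall n \leq \eta_{x^a}(a \circ p^a)\bigl(p^a n \geq n \wedge c^a(p^a n) = x^a\bigr). \]
I would then set $\varphi a := 2 + \max_{i \leq \eta_{x^a}(a \circ p^a)}\bigl(p^a i + a(p^a i)\bigr)$, an upper bound on all indices of $a$ that enter the final verification, and feed $\psi := \varphi$ into Lemma~\ref{swkl-lemma2} to obtain $a$ satisfying $n \leq a n$ and the min-monochromatic property throughout the range $n \leq \varphi a$.

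The final realisers are $F := a \circ p^a$ and $x := x^a$, and the claim would be verified on $k \leq \eta_x F$ by two direct checks. For the growth condition, $F k = a(p^a k) \geq p^a k \geq k$, using that $p^a k \leq \varphi a$ forces $a(p^a k) \geq p^a k$ and that $\IPP$ already provides $p^a k \geq k$. For the monochromaticity, if $F i < F j$ (i.e.\ $a(p^a i) < a(p^a j)$), then all three indices $p^a i$, $a(p^a i)+1$ and $p^a j$ lie strictly below $\varphi a$ by construction, and together with $a(p^a i) < a(a(p^a i)+1)$ the min-monochromatic property applied to this triple yields
\[ c(F i, F j) = c(a(p^a i), a(p^a j)) = c(a(p^a i), a(a(p^a i)+1)) = c^a(p^a i) = x. \]

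The only delicate point is the apparent circularity between $\varphi$ and $a$: $\varphi$ is defined from the $\IPP$-output $(x^a, p^a)$ which is computed from $a$, while $a$ is in turn produced by Lemma~\ref{swkl-lemma2} when fed $\psi = \varphi$. This is resolved by reading $a \mapsto (x^a, p^a)$ as a higher-type functional produced uniformly by Lemma~\ref{lemma-ipp}, so that $\varphi$ is itself a well-defined functional of type $\NN^\NN \to \NN$ that can be plugged into Lemma~\ref{swkl-lemma2}. This is exactly the ``run the lemma relative to a parameter'' pattern already used in passing from $\WKL$ to $\ER{c}$ in Sections~\ref{subsec-wkl} and~\ref{subsec-ac}, and it introduces no recursion beyond that already present inside the earlier lemmas.
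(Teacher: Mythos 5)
Your proposal is correct and follows essentially the same route as the paper: apply Lemma \ref{lemma-ipp} relative to the parameter $a$ via $\varepsilon^a_x p = \eta_x(a\circ p)$, feed the resulting counterexample bound $\psi$ (your $\varphi$) into Lemma \ref{swkl-lemma2}, and take $F = a\circ p^a$ and $x = x^a$, verifying the two conditions exactly as the paper does. The only deviation is minor and in fact slightly more careful: you define $c^a(n) = c(a(n), a(a(n)+1))$ as in the classical Theorem \ref{thm-ramsey} (and enlarge $\psi$ accordingly), whereas the proof of Theorem \ref{ramsey-main} uses $c^a(i)=c(a(i),a(i+1))$; your choice makes the inequality $a(n) < a(a(n)+1)$ explicit from $an \geq n$, which is what licenses the application of the min-monochromaticity property (\ref{swklram3}) given that $a$ is not guaranteed to be strictly increasing.
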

\begin{proof} Assume $c \colon [\NN]^2 \to \BB$ and $\eta_0 \colon J_{\NN} \NN$ and $\eta_1 \colon J_{\NN} \NN$ are given. For any function $a$ let $c^a(i) = c(a(i), a(i+1))$. 
%
%By the interpretation of $\IPP$ (Section \ref{sec-ipp}) we have that for all $\varepsilon_0$ and $\varepsilon_1$ there exists $p$ and $x < 2$ such that
%%
%\begin{equation}
%\forall j \!<\! \varepsilon_x p \, \forall i \!<\! j \, (p i < p j \wedge c^a(p(i)) = x).
%\end{equation}
%
Let $\varepsilon_x^a p = \eta_x(a \circ p)$, with $a \colon \NN \to \NN$ as a parameter. By Lemma \ref{lemma-ipp} we have that there exists $p^a$ and $x^a$ such that
\begin{equation} \label{pen2}
\forall i \!<\! \eta_{x^a}(a \circ p^a) (p^a(i) \geq i \wedge c^a(p^a(i)) = x^a).
\end{equation}
Let $\psi a = \max_{i\leq p^a(\eta_{x^a}(a \circ p^a))}p^a(i)$. By Lemma \ref{swkl-lemma2} there exists an $a \colon \NN \to \NN$ such that for all $n \leq p^a(\eta_{x^a}(a \circ p^a))$ we have $a n \geq n$ and 
\begin{equation} \label{pen1}
\forall i, j, k \!<\! n (ak < ai \wedge ak < aj \to c(a k, a i) = c(a k, a j)).
\end{equation}
Take $F = a \circ p^a$ and $x = x^a$. Therefore, for $k \leq \eta_x F = \eta_x(a \circ p^a)$ we have
\begin{itemize}
\item $p^a k \geq k$ by (\ref{pen2}) which, by the above implies that
	\[ F k = a(p^a k) \geq p^a k \geq k. \]
\item and, for $i, j \leq k$, given that $Fi < Fj$, we have
	\[ x \stackrel{(\ref{pen2})}{=} c^a(p^a(i)) = c(a(p^a(i)),a(p^a(i+1))). \]
	Hence
	\[ c(\underbrace{a(p^a i)}_{F i}, \underbrace{a(p^a j)}_{F j}) \stackrel{(\ref{pen1})}{=} c(a(p^a i), a(p^a(i) + 1)) \stackrel{(\ref{pen2})}{=} x. \]
\end{itemize}
\end{proof}

%%%%%%%%%%%%%%%%%%%%%%%%%%%%%%%%%%%%%%%%%%%%%%%%
\subsection{Summarising the construction of $x$ and $F$}
%%%%%%%%%%%%%%%%%%%%%%%%%%%%%%%%%%%%%%%%%%%%%%%%
\label{explicit}

From the proof of Theorem \ref{ramsey-main} we can read off the construction of $F$ and $x$ which we summarise here. Recall that the input to our problem is a colouring $c \colon \NN^2 \to \BB$ and a pair of selection functions $\eta_x \colon J_{\NN} \NN$. Also, recall the abbreviations
\eqleft{
\begin{array}{lcl}
\mt(s, k) & \equiv & \exists k' \!\in\! [|s|, k] \, \forall i < |s| (s_i = 0 \;\leftrightarrow\; i \prec k')) \\[2mm]
T^\beta(s) & \equiv & \mt(s, \beta(|s|)) \\[2mm]
\INF_n(T^\beta_s) & \equiv & \exists t (|t| = n \wedge T^\beta(s * t)).
\end{array}
}
(A) {\bf Construction of $x$ and $F$ given $a \colon \NN^\NN$}. First, assume a function $a \colon \NN^\NN$ given and let $c^a(i) = c(a(i), a(i+1))$ and $\varepsilon_x^a p = \eta_x(a \circ p)$. Define
\eqleft{\tilde \varepsilon_x p = \mu i \leq \varepsilon^a_x p \neg (p i \geq i \wedge c^a(p i) = x).}
Take $(k_0, k_1) = (\tilde \varepsilon_0 \otimes \tilde \varepsilon_1)(\max)$ and $x^a = c(\max\{k_0, k_1\})$ and
\eqleft{
p^a(k) =
\left\{
\begin{array}{ll}
\tilde \varepsilon_1 (\lambda k' . \max \{k, k'\}) & {\sf if} \; x^a = 0 \\[2mm]
\max \{k_0, k\} & {\sf if} \; x^a = 1.
\end{array}
\right.
}
and $F^a = a \circ p^a$. \\[2mm]
(B) {\bf Construction of $\alpha$ given $\beta \colon \NN^\NN$ and $\omega \colon \BB^\NN \times \NN^\NN \to \NN$}. Then, we construct a sequence $\alpha^{\beta, \omega} \colon \BB^\NN$ parametrised by $\beta \colon \NN^\NN$ and $\omega \colon \BB^\NN \times \NN^\NN \to \NN$ as follows. Let
\eqleft{q^{\beta, \omega} \sbool = \omega \sbool  \beta -k-1,}
where $k<\omega \sbool \beta$ is the least refuting 
\eqleft{\forall k < \omega \sbool \beta (\INF_{\omega \sbool \beta - k}(T^\beta_{\initSeg{\sbool}{k}}) \to \INF_{\omega \sbool \beta - k - 1}(T^\beta_{\initSeg{\sbool}{k+1}})),
}
and 
\eqleft{\varepsilon^\beta_s p \stackrel{\BB}{=}
\left\{
\begin{array}{ll}
	\True & {\rm if} \; \INF_{p(\True) + 1}(T^\beta_s) \to \INF_{p (\True)}(T^\beta_{s* \True}) \\[2mm]
	\False & {\rm otherwise}.
\end{array}
\right.
}
Define 
\eqleft{\alpha^{\beta, \omega} = \EPS{\pair{\,}}{\lambda \alpha. \omega \alpha \beta}{\varepsilon^\beta}(q^{\beta, \omega}).}
(C) {\bf Construction of $\beta$ given $\omega \colon \BB^\NN \times \NN^\NN \to \NN$ using (B)}. Using $\alpha^{\beta, \omega}$ we construct a sequence $\beta^{\omega} \colon \NN^\NN$ parametrised by $\omega \colon \BB^\NN \times \NN^\NN \to \NN$ only. Let $\delta_n \colon J_{\NN} \NN$ be 
\begin{equation*} 
\delta_n p = p^i(0)
\end{equation*}
where $i$ is the least $\leq 2^n$ such that, for all $s^{\BB^n}$, $\mt(s, p^{i+1}(0)) \to \mt(s, p^i(0))$, and
\eqleft{
\begin{array}{lcl}
\tilde\omega\beta & = & \max\{\omega \alpha^{\beta, \omega} \beta,|\omega \alpha^{\beta, \omega} \beta -q^{\beta, \omega} \alpha^{\beta, \omega}-1|+\max\{p(\True),p(\False)\}+1\} \\[2mm]
\tilde q\beta & = & \max\{\omega \alpha^{\beta, \omega} \beta, {\max}_{i\leq \tilde\omega\beta } \beta(i)\},
\end{array}
}
where
\eqleft{
\begin{array}{lcl}
p(x) & = & \selEmb{\EPS{s * x}{\lambda \alpha . \omega \alpha \beta}{\varepsilon^\beta}}(q^{\beta, \omega}_{s * x}) \\[2mm]
s & = & \initSeg{\alpha^{\beta, \omega}}{\omega \sbool^{\beta, \omega} \beta -q^{\beta, \omega} \sbool^{\beta, \omega} -1}.
\end{array}
}
Define
\eqleft{\beta^\omega = \EPS{\pair{}}{\tilde\omega}{\delta}(\tilde q).}
(D) {\bf Construction of $\omega$ using (A)}. We now construct the missing $\omega$ as
\eqleft{\omega \alpha \beta = \max_{i\leq\psi(a^{\alpha, \beta})}(\max\{i,\beta i+1,\beta(\beta i+1)+1\})}
where $\psi a = \max_{i\leq p^a(\eta_{x^a}(a \circ p^a))}p^a(i)$, with $p^a$ and $x^a$ as defined in (A), and
\eqleft{
a^{\sbool, \swit} n =
\left\{
\begin{array}{ll}
0 & {\sf if} \; n = 0 \\[2mm]
\mu k \in [n-1, \beta(\beta (n-1) + 1)] \left( \sbool k = 0 \right) & {\sf if} \; n > 0.
\end{array}
\right.
}
(E) {\bf Construction of $x$ and $F$ using (A) -- (D)}. Finally, take $\beta = \beta^\omega$ and $\alpha = \alpha^{\beta, \omega}$ and $a = a^{\alpha, \beta}$, so that $x$ and $F$ are defined as $x = x^a$ and $F = a \circ p^a$.

%%%%%%%%%%%%%%%%%%%%%%%%%%%%%%%%%%%%%%%%%%%%%%%%
%%%%%%%%%%%%%%%%%%%%%%%%%%%%%%%%%%%%%%%%%%%%%%%%
\section{A Game-Theoretic Reading of the Proof}
%%%%%%%%%%%%%%%%%%%%%%%%%%%%%%%%%%%%%%%%%%%%%%%%
%%%%%%%%%%%%%%%%%%%%%%%%%%%%%%%%%%%%%%%%%%%%%%%%
\label{subsec-gameint}

Following the discussion in Section \ref{sec-selection}, we know that each instance of $\EPSs$ used in our finitisation of Ramsey's theorem corresponds to the computation of an optimal strategy in a partially defined\footnote{We call a game $\game$ partially defined when not all three parameters $\varepsilon$, $q$ and $\omega$ are given, and write the open parameters in square brackets e.g. $\game[\varepsilon]$.} game. We now discuss the specific games corresponding to the main instances of $\EPSs$ used in our extracted program, and show how our constructive proof Ramsey's theorem can be understood in game-theoretic terms.

\subsubsection*{$\Pi^0_1$ countable choice: $\game^{\NN,\NN}_{\pCA}[\tilde q,\tilde\omega] = (\delta,\tilde q,\tilde \omega)$}

The game central to our interpretation is that corresponding to our use of countable choice. The selection functions $\delta_n$ defined in Lemma \ref{swkl-sel} implement a `no new branches' strategy, picking a number $i = \delta_n p$ satisfying $$\forall s^{\BB^n}(\mt(s,p(i))\to \mt(s,i))$$ i.e. there are no branches $s$ of $T$ which have a witness bounded by the outcome $p(i)$ which is not already bounded by the move $i$. 

For any outcome function $\tilde q$ and control function $\tilde\omega$, an optimal strategy in this case is a sequence $\beta$ satisfying, for all $n\leq\tilde\omega\beta$,
\[ \forall s^{\BB^n}(\mt(s,\tilde q\beta)\to \mt(s,\beta n)). \]
This means that every move $\beta n$ in the play $\beta$ (for $n\leq\tilde \omega \beta$) already bounds a witness for any branch $s$ of length $n$ which has a witness bounded by the final outcome $\tilde q \beta$. This optimal strategy is precisely the approximation to a monotone Skolem function we require.

\subsubsection*{Weak K\"{o}nig's lemma: $\game^{\BB,\NN}_{\WKL}[\omega] = (\varepsilon,q^\omega,\omega)$}

The interpretation of $\WKL$ applied to the decidable tree $T^\beta$ is interpreted by a binary game (where the set of possible moves at each round is $\BB$). The strategy $\varepsilon_s$ at position $s$ defined by the selection functions given in Lemma \ref{wkl-lemma-weak} is to pick a boolean $b$ such that if $s$ extends to a branch in $T$ of length $|s|+p(b)+1$ then $s\ast b$ also extends to a branch of length $|s|+p(b)+1$. 

Given $\omega$, by choosing $q^\omega$ suitably as in Theorem \ref{wkl-main}, the optimal strategy of $\game_\WKL$ determined by these selection functions is a sequence $\alpha$ such that for all $k\leq\omega\alpha$, whenever $\initSeg{\alpha}{k}$ extends to a branch of length $\omega\alpha$, so does $\initSeg{\alpha}{k+1}$. If $T^\beta$ is infinite then $\pair{}$ extends to a branch of length $\omega\alpha$. Hence, by induction the relevant part $\initSeg{\alpha}{\omega\alpha}$ of this optimal play must be in $T^\beta$, and is therefore an approximation to an infinite branch.

\subsubsection*{The infinite pigeonhole principle: $\game^{\NN,\NN}_{\IPP}[\varepsilon] = (\tilde\varepsilon,\max,2)$}

The game corresponding to $\IPP$ is a \textit{finite} game with two rounds (or $n$ rounds for the $n$-colour Ramsey's theorem). The strategy $\tilde\varepsilon$ at each round $x=0,1$ is to play the least move $i\leq\varepsilon_xp$ the outcome $p(i)$ of which satisfies
\[ p(i)<i\vee c(p(i)) \neq x. \]
We compute the optimal play $\pair{a_0,a_1}$, and its outcome is the maximum $N=\max\{a_0,a_1\}$. But then, at round $x=c(N)$ we have $$p_x(a_x)\geq a_x\wedge c(p_x(a_x))=x$$ since $p_x(a_x)=N$, which implies that the selection function $\tilde\varepsilon_x$ must fail to find a suitable candidate. But since we know that an optimal strategy must exist, the only explanation is that such a candidate does not exist, or in other words, $x,p_x$ form an approximation to the infinite pigeonhole principle. \\

Following the discussion at the beginning of the section, it is not too hard to visualise how these games combine to witness the functional interpretation of Ramsey's theorem. We compute an optimal strategy $\beta$ in the game $$\game_{\pCA}[\lambda\beta.K^{\beta,\omega},\lambda\beta.N^{\beta,\omega}]$$ where the outcome and control functions involve computing an optimal strategy $\alpha^\beta$ in the auxiliary game $$\game_{\WKL}[\omega_\beta]$$ on $T^\beta$. As a result we obtain two optimal strategies $\beta$, $\alpha^\beta$ that combine to form an approximation $a^{\alpha,\beta}$ to a min-monochromatic branch.

In addition, the control function $\omega_\beta$ is defined in terms of $\varphi a^{\alpha,\beta}$, which in turn involves computing an optimal strategy in a \textit{further} auxiliary game $$\game_{\IPP}[\lambda x,p.\eta_x(a^{\alpha,\beta}\circ p)]$$ where $\eta$ is our counterexample function for $\RAMPAIR{c}$, in order to produce $x^a$, $p^a$ required to compute $\varphi a$.

Therefore our program can be viewed in terms of the computational of optimal strategies in three symbiotic games: one central game corresponding to $\pCA$ and two nested auxiliary games that are run each time we call on the relevant counterexample functions.

The computation as a whole returns an optimal strategy $\beta$ of $\game_{\pCA}$ and an optimal strategy $\alpha^\beta$ of $\game_{\WKL}$ that combine to form a sequence $a^{\alpha,\beta}$, along with $p^a$, $x^a$ arising from optimal strategy in $\game_{\IPP}$. Our realiser for the functional interpretation of Ramsey's theorem $F=a^{\alpha,\beta}\circ p^{a^{\alpha,\beta}}$ and $x=x^{a^{\alpha,\beta}}$ can therefore be written in terms of optimal strategies in these three games. \\

\noindent\textbf{Acknowledgements.} The authors thank the two anonymous referees for their helpful comments and suggestions on the preliminary version of this paper. The first author gratefully acknowledges support of The Royal Society under grant 516002.K501/RH/kk, and the second author acknowledges the support of an EPSRC doctoral training grant.

\bibliographystyle{plain}

\bibliography{dblogic-ram}

\begin{thebibliography}{10}

\bibitem{Avigad(2009)}
J.~Avigad.
\newblock The metamathematics of ergodic theory.
\newblock {\em Annals of Pure and Applied Logic}, 157:64--76, 2009.

\bibitem{Avigad(98)}
J.~Avigad and S.~Feferman.
\newblock G\"odel's functional (``{D}ialectica") interpretation.
\newblock In S.~R. Buss, editor, {\em Handbook of proof theory}, volume 137 of
  {\em Studies in Logic and the Foundations of Mathematics}, pages 337--405.
  North Holland, Amsterdam, 1998.

\bibitem{Bellin(90)}
G.~Bellin.
\newblock Ramsey interpreted: a parametric version of {R}amsey's theorem.
\newblock In {\em Logic and computation ({P}ittsburgh, {PA}, 1987)}, volume
  106, pages 17--37. Amer. Math. Soc., Providence, RI, 1990.

\bibitem{Coquand(1994)}
T.~Coquand.
\newblock An analysis of {R}amsey's theorem.
\newblock {\em Information and Computation}, 110(2):297--304, 1994.

\bibitem{Coquand(1994B)}
T.~Coquand.
\newblock A direct proof of {R}amsey's theorem.
\newblock Author's website, 1994.

\bibitem{Erdos(84)}
P.~Erd\H{o}s, A.~Hajnal, A.~M\'{a}t\'{e}, and R.~Rado.
\newblock {\em Combinatorial Set Theory: Partition Relations for Cardinals},
  volume 106 of {\em Studies in Logic and the Foundations of Mathematics}.
\newblock North-Holland Publishing Company, 1984.

\bibitem{EO(2010A)}
M.~H. Escard{\'o} and P.~Oliva.
\newblock Computational interpretations of analysis via products of selection
  functions.
\newblock In F.~Ferreira, B.~Lowe, E.~Mayordomo, and L.~M. Gomes, editors, {\em
  Computability in Europe 2010, LNCS 6158}, pages 141--150. Springer, 2010.

\bibitem{EO(2009)}
M.~H. Escard{\'o} and P.~Oliva.
\newblock Selection functions, bar recursion, and backward induction.
\newblock {\em Mathematical Structures in Computer Science}, 20(2):127--168,
  2010.

\bibitem{EO(2011A)}
M.~H. Escard{\'o} and P.~Oliva.
\newblock Sequential games and optimal strategies.
\newblock {\em Royal Society Proceedings A}, 467:1519--1545, 2011.

\bibitem{EO(2012A)}
M.~H. Escard{\'o} and P.~Oliva.
\newblock Computing {N}ash equilibria of unbounded games.
\newblock In {\em Proceedings of the Turing 100 Conference}. Turing 100,
  Manchester, 2012.

\bibitem{EOP(2011)}
M.~H. Escard{\'o}, P.~Oliva, and T.~Powell.
\newblock System {T} and the product of selection functions.
\newblock {\em Proceedings of CSL'11}, LIPIcs 12:233--247, 2011.

\bibitem{Goedel(58)}
K.~G{\"o}del.
\newblock {\"U}ber eine bisher noch nicht ben{\"u}tzte {E}rweiterung des
  finiten {S}tandpunktes.
\newblock {\em Dialectica}, 12:280--287, 1958.

\bibitem{Kohlenbach(2008)}
U.~Kohlenbach.
\newblock {\em Applied Proof Theory: Proof Interpretations and their Use in
  Mathematics}.
\newblock Monographs in Mathematics. Springer, 2008.

\bibitem{Kreuzer(2009)}
U.~Kohlenbach and A.~Kreuzer.
\newblock Ramsey's theorem for pairs and provably recursive functions.
\newblock {\em Notre Dame Journal of Formal Logic}, 50:427--444, 2009.

\bibitem{Kreisel(51)}
G.~Kreisel.
\newblock On the interpretation of non-finitist proofs, part {I}.
\newblock {\em The Journal of Symbolic Logic}, 16:241--267, 1951.

\bibitem{Kreisel(52)}
G.~Kreisel.
\newblock On the interpretation of non-finitist proofs, part {II}:
  {I}nterpretation of number theory.
\newblock {\em The Journal of Symbolic Logic}, 17:43--58, 1952.

\bibitem{Kreuzer(2009A)}
A.~Kreuzer.
\newblock Der {S}atz von {R}amsey f{\"u}r {P}aare und beweisbar rekursive
  {F}unktionen.
\newblock Diploma thesis, February 2009.

\bibitem{OP(2012)}
P.~Oliva and T.~Powell.
\newblock A game-theoretic computational interpretation of proofs in classical
  analysis.
\newblock Preprint, available online at http://arxiv.org/abs/1204.5244, 2012.

\bibitem{Ramsey(30)}
F.~Ramsey.
\newblock On a problem of formal logic.
\newblock {\em Proceedings of the London Mathematical Society},
  s2-30(1):264--286, 1930.

\bibitem{Simpson(99)}
S.~G. Simpson.
\newblock {\em Subsystems of Second Order Arithmetic}.
\newblock Perspectives in Mathematical Logic. Springer, Berlin, 1999.

\bibitem{Spector(62)}
C.~Spector.
\newblock Provably recursive functionals of analysis: a consistency proof of
  analysis by an extension of principles in current intuitionistic mathematics.
\newblock In F.~D.~E. Dekker, editor, {\em Recursive Function Theory: Proc.
  Symposia in Pure Mathematics}, volume~5, pages 1--27. American Mathematical
  Society, Providence, Rhode Island, 1962.

\bibitem{Tao(2008A)}
T.~Tao.
\newblock {\em Structure and Randomness: pages from year one of a mathematical
  blog}.
\newblock American Mathematical Society, 2008.

\bibitem{Veldman(1993)}
W.~Veldman and M.~Bezem.
\newblock Ramsey's theorem and the pigeonhole principle in intuitionistic
  mathematics.
\newblock {\em Journal of the London Mathematical Society}, 2(47):193--211,
  1993.

\end{thebibliography}

\end{document}